\newcommand{\de}[0]{\mathrel{\mathop:}=}
\newcommand{\ie}[0]{\mathrm{i}}
\newcommand{\dif}[1]{\mathrm{d}#1}
\newcommand{\R}[0]{\mathbb{R}}
\newcommand{\N}[0]{\mathbb{N}}
\newtheorem{theorem}{Theorem}
\newtheorem{corollary}{Corollary}
\newtheorem{lemma}{Lemma}
\title[On explicit estimates for $S(t)$, $S_1(t)$, and $\zeta\left(1/2+\mathrm{i}t\right)$ under RH]{On explicit estimates for $S(t)$, $S_1(t)$, and $\zeta\left(1/2+\mathrm{i}t\right)$ under the Riemann Hypothesis}
\author{Aleksander Simoni\v{c}}
\address{School of Science, The University of New South Wales (Canberra), ACT, Australia}
\email{a.simonic@student.adfa.edu.au}
\subjclass[2010]{11M26}
\keywords{Riemann zeta-function, Riemann Hypothesis, Explicit results}
\begin{document}

\begin{abstract}
Assuming the Riemann Hypothesis, we provide explicit upper bounds for moduli of $S(t)$, $S_1(t)$, and $\zeta\left(1/2+\mathrm{i}t\right)$ while comparing them with recently proven unconditional ones. As a corollary we obtain a conditional explicit bound on gaps between consecutive zeros of the Riemann zeta-function.
\end{abstract}

\maketitle
\thispagestyle{empty}

\section{Introduction}

Let $\zeta(s)$ be the Riemann zeta-function and $s=\sigma+\ie t$, where $\sigma$ and $t$ are real numbers. For $t\in\R$ distinct from the ordinate of any nontrivial zero $\rho=\beta+\ie\gamma$ of $\zeta(s)$ let
\begin{equation*}
\label{eq:Sfunction}
S(t)=\frac{1}{\pi}\arg{\zeta\left(\frac{1}{2}+\ie t\right)},
\end{equation*}
and
\[
S(t)=\lim_{\varepsilon\downarrow0}\frac{1}{2}\left(S(t+\varepsilon)+S(t-\varepsilon)\right)
\]
otherwise, where $\arg{\zeta\left(1/2+\ie t\right)}$ is defined by the continuous extension of $\arg{\zeta(s)}$ along straight lines connecting $s=2$, $s=2+\ie t$, $s=1/2+\ie t$, and oriented in this direction, where $\arg{\zeta(2)}=0$. The importance of studying the behaviour of $S(t)$ lies within the Riemann--von Mangoldt formula
\begin{equation}
\label{eq:RvM}
N(T) = \frac{T}{2\pi}\log{\frac{T}{2\pi e}} + \frac{7}{8} + S(T) + R(T),
\end{equation}
where $N(T)$ is the number of $\rho$ with $0<\gamma\leq T$, and $R(T)=O\left(1/T\right)$. It is well known that
\begin{equation}
\label{eq:Sclassic}
S(t)=O\left(\log{t}\right),
\end{equation}
and the most recent explicit bound for~\eqref{eq:Sclassic} is
\begin{equation}
\label{eq:boundS}
\left|S(t)\right| \leq 0.11\log{t} + 0.29\log{\log{t}} + 2.29
\end{equation}
for $t\geq e$, see~\cite[Corollary 1]{Platt}. At present, there is no unconditional improvement on~\eqref{eq:Sclassic}. However, Cram\'{e}r~\cite{CramerLH} proved that the Lindel\"{o}f Hypothesis (LH),
\begin{equation}
\label{eq:LH}
\zeta\left(\frac{1}{2}+\ie t\right) \ll_{\varepsilon} |t|^{\varepsilon}
\end{equation}
for every $\varepsilon>0$, guarantees $S(t)=o\left(\log{t}\right)$, and Littlewood proved in~\cite[Theorem 11]{LittlewoodOnTheZeros} that the Riemann Hypothesis (RH), namely that in all $\rho$ we have $\beta=1/2$, provides a quantitative version of this estimate since then
\begin{equation}
\label{eq:SRH}
\left|S(t)\right| \leq \left(C_0+o(1)\right)\frac{\log{t}}{\log{\log{t}}}
\end{equation}
for some $C_0>0$ and $t$ large. Selberg contributed in~\cite[Theorem 1]{SelbergOnTheRemainder} a different proof of~\eqref{eq:SRH} via his unconditional equation~\eqref{eq:selberg}. The order of magnitude of~\eqref{eq:SRH} has never been improved, and the efforts have been concentrated in optimizing the value of $C_0$. By means of Selberg's method, Ramachandra and Sankaranarayanan~\cite{Ramachandra} obtained $C_0=1.12$, and Fujii~\cite{FujiiAnExplicit} improved\footnote{The value of $C_0$ is incorrectly calculated there to $0.67$, but corrected and even slightly improved to $C_0=0.81$ in~\cite[pp.~146--147]{FujiiANote}.} this to $C_0=0.83$. Goldston and Gonek~\cite{GoldstonGonek} discovered a different method based on a variation of the Guinand--Weil exact formula~\cite[Equation 25.10]{IKANT}. They obtained $C_0=1/2$, which was later improved by Carneiro, Chandee, and Milinovich~\cite[Theorem 2]{CCMBounding} to $C_0=1/4$.

The antiderivative of $S(t)$,
\[
S_1(t) \de \int_{0}^{t} S(u)\dif{u}
\]
for $t\geq0$, has been also intensively studied. Littlewood proved in the same paper~\cite{LittlewoodOnTheZeros} that
\begin{equation*}
S_1(t)=O\left(\log{t}\right),
\end{equation*}
that LH implies\footnote{As observed by Ghosh and Goldston, LH is in fact equivalent to $S_1(t)=o\left(\log{t}\right)$, see~\cite[p.~335]{Titchmarsh} for a sketch of the proof.} $S_1(t)=o\left(\log{t}\right)$, and that RH refines this to
\begin{equation}
\label{eq:S1RH}
\left|S_1(t)\right| \leq \left(C_1+o(1)\right)\frac{\log{t}}{\left(\log{\log{t}}\right)^2}
\end{equation}
for some $C_1>0$ and $t$ large. We\footnote{This follows from~\cite[Theorem 2.2]{TrudgianImpr2}, and the fact that $\left|S_1(168\pi)\right|\leq0.987$ and \eqref{eq:boundS1} is true for $1\leq t\leq168\pi$.} also know that
\begin{equation}
\label{eq:boundS1}
\left|S_1(t)\right| \leq 0.059\log{t} + 3.054
\end{equation}
for $t\geq1$. As for $S(t)$, the order of magnitude of~\eqref{eq:S1RH} has never been improved and several authors have given explicit values for $C_1$. Following Selberg's method, Karatsuba and Korol\"{e}v~\cite[Theorem 2 of Chapter III]{KaratsubaKorolevArgument} obtained $C_1=40$, and Fujii~\cite{FujiiANote} improved this to $C_1=0.51$. Goldston--Gonek's approach was used in~\cite[Theorem 1]{CCMBounding} to get $C_1=\pi/24$.

Currently the best known result concerning~\eqref{eq:LH} is
\[
\zeta\left(\frac{1}{2}+\ie t\right) \ll_{\varepsilon} |t|^{\frac{13}{84}+\varepsilon}
\]
for $\varepsilon>0$, due to Bourgain~\cite{BourgainDecoupling}. Because RH implies LH, see~\cite[Theorem 14.2]{Titchmarsh}, it should be expected that something more precise than~\eqref{eq:LH} is possible to state on the truth of RH. As a corollary to his methods for $S(t)$ and related functions, Littlewood~\cite[Theorem 12]{LittlewoodOnTheZeros} proved that RH implies
\begin{equation}
\label{eq:zetaRH}
\left|\zeta\left(\frac{1}{2}+\ie t\right)\right| \leq \exp{\left(\left(C_{00}+o(1)\right)\frac{\log{t}}{\log{\log{t}}}\right)}
\end{equation}
for some $C_{00}>0$ and $t$ large. Also in this case the order of magnitude of~\eqref{eq:zetaRH} has never been improved and efforts were put into obtaining explicit values for $C_{00}$. It was proved in~\cite{Ramachandra} by Selberg's method that $C_{00}=0.47$, while Soundararajan~\cite{SoundMoments} used its refined version to obtain $C_{00}=0.373$. Goldston--Gonek's method was used by Chandee and Soundararajan~\cite{ChandeeSound}, and also in~\cite[Corollary 4]{Carneiro}, to give $C_{00}=\log{\sqrt{2}}$. The classical subconvexity bound was made explicit by Hiary~\cite{HiaryAnExplicit}, who proved\footnote{This result relies on the erroneous Cheng--Graham explicit version of van der Corput's second derivative test, see \cite{Patel} for details. After accounting for this correction, the constant from \eqref{eq:hiary} changes to $0.77$. Our results are not affected by this.} that
\begin{equation}
\label{eq:hiary}
\left|\zeta\left(\frac{1}{2}+\ie t\right)\right| \leq 0.63 t^{\frac{1}{6}}\log{t}
\end{equation}
holds for $t\geq 3$.

None of the previously mentioned authors obtained explicit $o$-terms in~\eqref{eq:SRH},~\eqref{eq:S1RH}, and~\eqref{eq:zetaRH}. We are not able to deduce from these estimates any concrete values for $t$ when RH provides better bounds compared to unconditional and explicit ones~\eqref{eq:boundS},~\eqref{eq:boundS1}, and~\eqref{eq:hiary}. Additionally, after choosing a method, we are interested in optimization of certain parameters in order to obtain explicit conditional bounds which become better than unconditional ones at the lowest $t$ possible. The main result of this paper is the following theorem.

\begin{theorem}
\label{thm:main}
Assume the Riemann Hypothesis. Let
\begin{equation}
\label{eq:M}
\mathcal{M}\left(a,b,c;t\right)\de a + \frac{b}{\left(\log{t}\right)^{c}\log{\log{t}}}
\end{equation}
for positive real numbers $a$, $b$ and $c$. Then we have
\[
\left|S(t)\right| \leq \mathcal{M}\left(0.759282,20.1911,0.285;t\right)\frac{\log{t}}{\log{\log{t}}}
\]
for $t\geq10^{2465}$,
\[
\left|S_1(t)\right| \leq \mathcal{M}\left(0.653,60.12,0.2705;t\right)\frac{\log{t}}{\left(\log{\log{t}}\right)^2}
\]
for $t\geq 10^{208}$, and
\[
\left|\zeta\left(\frac{1}{2}+\ie t\right)\right| \leq \exp{\left(\mathcal{M}\left(0.5,6.992,0.252;t\right)
\frac{\log{t}}{\log{\log{t}}}\right)}
\]
for $t\geq10^{40}$.
\end{theorem}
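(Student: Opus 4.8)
The plan is to derive all three inequalities from a single conditional input: an explicit bound, valid under the Riemann hypothesis, for $\log\left|\zeta(\sigma+\ie t)\right|$ in the strip $\sigma\geq1/2$, together with the classical devices that turn such a bound into estimates for $S(t)$ and $S_1(t)$. The bound on $\log|\zeta|$ would be produced by the Dirichlet-polynomial method of Goldston--Gonek and Chandee--Soundararajan (and the related treatments of Carneiro and of Carneiro--Chandee--Milinovich), carried out with every constant and every error term made explicit; concretely one aims, for a truncation length $x\geq2$ and a shift $\lambda>0$ as free parameters, at a bound of the shape
\[
\log\left|\zeta\left(\tfrac{1}{2}+\ie t\right)\right| \leq \mathrm{Re}\sum_{n\leq x}\frac{\Lambda(n)}{n^{1/2+\lambda/\log x}\log n}\,\frac{\log(x/n)}{\log x}\cdot n^{-\ie t} \;+\; \frac{\log t}{\log x}\,A(\lambda) \;+\; E(t,x),
\]
with $A(\lambda)$ explicit and $E(t,x)$ a lower-order remainder, and analogously for $\sigma>1/2$. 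The bound for $\left|\zeta(1/2+\ie t)\right|$ is then the case $\sigma=1/2$; the bound for $S(t)=\pi^{-1}\arg\zeta(1/2+\ie t)$ is deduced by Backlund's argument, which bounds the number of sign changes of $\mathrm{Re}\,\zeta(\sigma+\ie t)$ along a segment crossing the critical line by a quantity controlled, via Jensen's inequality, by the maximum of $\left|\zeta\right|$ on a nearby disk --- supplied by the bound just obtained, together with the functional equation for the part with $\sigma<1/2$; and the bound for $S_1(t)$ is deduced by Littlewood's lemma, which under the Riemann hypothesis reduces $S_1(t)$, up to an explicitly controlled term, to $\pi^{-1}\int_{1/2}^{\infty}\log\left|\zeta(\sigma+\ie t)\right|\dif\sigma$, so that integrating the $\sigma$-dependent bound furnishes the extra factor $\log\log t$ in the denominator.

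The second ingredient is the estimation of the Dirichlet polynomial. For an upper bound one replaces $n^{-\ie t}$ by $1$, reducing matters to $\sum_{n\leq x}\frac{\Lambda(n)}{n^{1/2+\lambda/\log x}\log n}\frac{\log(x/n)}{\log x}$; partial summation against explicit estimates for $\psi(u)$ of Rosser--Schoenfeld and Dusart type shows this is at most a quantity of order $e^{-\lambda}\sqrt{x}/(\log x)^{2}$. The constants emerging here feed directly into the leading constant $a$, so they have to be tracked rather than absorbed.

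The third ingredient is the optimization. With the main term of order $e^{-\lambda}\sqrt{x}/(\log x)^{2}$ and the error of order $A(\lambda)\log t/\log x$, one takes $x$ to be essentially a fixed power of $\log t$; this produces the profiles $\log t/\log\log t$ (for $S$ and $\zeta$) and $\log t/(\log\log t)^{2}$ (for $S_1$), and the admissible exponent of $\log t$ in $x$ together with $\lambda$ is then tuned to make the combined leading constant small. Rather than the asymptotically optimal choice, one fixes clean, mildly suboptimal values, so that the prime-sum tail, the remainder $E(t,x)$, the errors in the Chebyshev-type estimates, and the gap between the chosen exponent and its optimum can all be bounded above by $b\left((\log t)^{c}\log\log t\right)^{-1}$ times the relevant profile; this is what forces the shape $\mathcal{M}(a,b,c;t)$ of \eqref{eq:M} and explains why the stated leading constants (for instance $a=1/2$ for $\zeta$, exceeding the known asymptotic value $\log\sqrt{2}$) come out slightly larger than optimal. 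It then remains, for each function, to fix $a$, $b$, $c$ and the threshold $t_0$ and to verify numerically that the inequality holds for $t\geq t_0$; the parameters are chosen so that $t_0$ is as small as the method allows subject to the conditional bound improving on the unconditional ones \eqref{eq:boundS}, \eqref{eq:boundS1} and \eqref{eq:hiary}, which is what pins $t_0$ down at $10^{2465}$, $10^{208}$ and $10^{40}$ respectively.

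The main obstacle is precisely that all of this must be done near-optimally. Making the conditional representation fully explicit with sharp constants is delicate --- one needs explicit control of the contribution of the zeros, hence explicit bounds on the number of zeros in short intervals, available from \eqref{eq:RvM} and \eqref{eq:boundS} --- the prime-sum estimates must be simultaneously explicit and essentially tight, and the joint optimization over $a$, the truncation exponent and $\lambda$ has to be performed so as to push $t_0$ as low as possible while keeping the final bound in the rigid three-parameter form. A secondary technical point is the passage from $\log|\zeta|$ to $S_1(t)$: one must justify Littlewood's lemma in explicit form, control $\log\left|\zeta(\sigma+\ie t)\right|$ uniformly for $\sigma$ from $1/2$ out to the region of absolute convergence, and handle carefully the range of $\sigma$ near $1/2$, where the Riemann hypothesis does the essential work.
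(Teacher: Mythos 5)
Your plan for $\left|\zeta\left(1/2+\ie t\right)\right|$ is essentially the route the paper takes (Soundararajan's inequality for $\log\left|\zeta\right|$ via a Dirichlet polynomial at $\sigma_{x,\alpha}=\tfrac12+\alpha/\log x$, explicit prime sums under RH, then $x=\left(\log t\right)^{\lambda}$), but your reductions for $S(t)$ and $S_1(t)$ contain genuine gaps. For $S(t)$: Backlund's sign-change argument cannot produce the order $\log t/\log\log t$ from an upper bound on $\left|\zeta\right|$, let alone the constant $0.759\ldots$. Any Jensen disk that covers the segment from $\tfrac12+\ie t$ to $2+\ie t$ must protrude to $\sigma<\tfrac12-\delta'$ for some $\delta'>0$, and there the functional equation forces $\left|\zeta(\sigma+\ie t)\right|\asymp t^{1/2-\sigma}$, so $\log M\gtrsim\delta\log t$ while the Jensen denominator $\log(R/r)\asymp\delta$; the two $\delta$'s cancel and you recover only $O(\log t)$, with the RH input contributing nothing at leading order. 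This is precisely why Littlewood, Selberg, Goldston--Gonek, and Carneiro--Chandee--Milinovich all work with $\log\zeta$, $\zeta'/\zeta$, or sums over zeros directly rather than with a maximum-modulus count; the paper uses the representation $\pi S(t)=-\int_{1/2}^{\infty}\Im\{\zeta'/\zeta\}\,\dif{\sigma}$ together with Selberg's formula for $\zeta'/\zeta$ with the weights $\Lambda_x(n)$ and an explicit control of the zero sum $F\left(s_{x,\alpha}\right)$ (Lemmas~\ref{lem:logder} and~\ref{lem:selberg}), which is what makes the constant $b_1(\alpha)$, minimized near $\alpha\approx1.53$, attainable. Even a repaired version of your route (Borel--Carath\'eodory plus Hadamard three circles applied to $\log\zeta$, i.e.\ Littlewood's original argument) gives explicit constants far weaker than $0.759$, so the stated numerical theorem would not follow.

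For $S_1(t)$ there is a second, independent problem: integrating an upper bound for $\log\left|\zeta(\sigma+\ie t)\right|$ over $\sigma\geq\tfrac12$ bounds $S_1(t)$ from above only, whereas the theorem asserts a bound on $\left|S_1(t)\right|$. A lower bound requires controlling how negative $\int_{1/2}^{\infty}\log\left|\zeta(\sigma+\ie t)\right|\dif{\sigma}$ can be, which no one-sided majorization of $\log\left|\zeta\right|$ supplies; the paper gets two-sided control because in Theorem~\ref{thm:selberg2} the quantity $\Re\{\zeta'/\zeta\}$ is approximated (not merely majorized) by the Dirichlet polynomial, with the zero contribution bounded in absolute value through \eqref{eq:important} and Fujii's estimate $\left|K(\gamma)\right|\leq\tfrac12\left(\sigma_{x,\alpha}-\tfrac12\right)^3$. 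So as written, your argument could at best yield the $\zeta$ bound and a one-sided $S_1$ bound of the right shape, and for $S(t)$ not even the correct order of magnitude; the specific constants $(0.759282,\,20.1911)$ and $(0.653,\,60.12)$ and the thresholds $10^{2465}$, $10^{208}$ are tied to the Selberg--Fujii machinery with the optimized parameters $\alpha$, $\lambda$, which your outline does not reproduce.
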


In particular, Theorem \ref{thm:main}, together with \eqref{eq:boundS},\footnote{We are also using \cite[Corollary 1]{BrentPlattTrudgianPNT} that $\left|S(t)\right|\leq0.28\log{t}$ for $t\geq2\pi$. Note that this estimate is for $2\pi\leq t\leq10^8$ better than \eqref{eq:boundS}.} \eqref{eq:boundS1}, and \eqref{eq:hiary}, asserts the following.

\begin{corollary}
Assume the Riemann Hypothesis. Then we have
\[
\left|S(t)\right| \leq \frac{0.96\log{t}}{\log{\log{t}}}, \quad \left|S_1(t)\right| \leq \frac{2.491\log{t}}{\left(\log{\log{t}}\right)^2}, \quad
\left|\zeta\left(\frac{1}{2}+\ie t\right)\right| \leq \exp{\left(\frac{0.995\log{t}}{\log{\log{t}}}\right)}
\]
for $t\geq2\pi$.
\end{corollary}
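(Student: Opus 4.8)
The plan is to treat the three assertions separately, and for each to split the range $t\geq2\pi$ at the threshold $t_0$ supplied by Theorem~\ref{thm:main}: $t_0=10^{2465}$ for $S(t)$, $t_0=10^{208}$ for $S_1(t)$, and $t_0=10^{40}$ for $\zeta(1/2+\ie t)$. On $[t_0,\infty)$ I would invoke the conditional estimate of Theorem~\ref{thm:main}; on $[2\pi,t_0]$ I would use, respectively, the unconditional bound \eqref{eq:boundS} together with the Brent--Platt--Trudgian bound $|S(t)|\leq0.28\log t$ valid for $2\pi\leq t\leq10^{8}$, the unconditional bound \eqref{eq:boundS1}, and Hiary's bound \eqref{eq:hiary}.

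For the tail $t\geq t_0$ the one structural input needed is that, for fixed $b,c>0$, the function $t\mapsto\mathcal{M}(a,b,c;t)$ is strictly decreasing on $(e,\infty)$, since $(\log t)^{c}\log\log t$ is there increasing and positive. Hence on $[t_0,\infty)$ one has $\mathcal{M}(a,b,c;t)\leq\mathcal{M}(a,b,c;t_0)$, so it suffices to evaluate $\mathcal{M}$ at the single point $t_0$ and check $\mathcal{M}(0.759282,20.1911,0.285;10^{2465})<0.96$, $\mathcal{M}(0.653,60.12,0.2705;10^{208})\leq2.488$, and $\mathcal{M}(0.5,6.361,0.252;10^{40})<0.95$. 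Theorem~\ref{thm:main} then yields the three bounds for all $t\geq t_0$.

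For the initial range $2\pi\leq t\leq t_0$ I would put $L=\log t$ and reduce each statement to a one-variable inequality on a compact $L$-interval: for $S(t)$, $0.28L\leq0.96\,L/\log L$ on $[\log2\pi,\log10^{8}]$ (i.e.\ $\log L\leq0.96/0.28$, immediate) and $0.11L+0.29\log L+2.29\leq0.96\,L/\log L$ on $[\log10^{8},\log10^{2465}]$; for $\zeta(1/2+\ie t)$, $\log0.63+L/6+\log L\leq0.95\,L/\log L$ on $[\log2\pi,\log10^{40}]$; and the analogous inequality for $S_1(t)$ coming from \eqref{eq:boundS1}. In every case the quotient of the unconditional bound by the target bound is, to leading order, a positive constant times $(\log\log t)^{k}$ with $k=1$ for $S(t)$ and $\zeta(1/2+\ie t)$ and $k=2$ for $S_1(t)$, hence increasing, so the binding point of each inequality is its right endpoint $t=t_0$; it then remains to confirm the inequality there, after a routine check (monotonicity of the excess function, or its values at the endpoints) excluding an interior violation.

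The only genuinely delicate point, and the one requiring care rather than new ideas, is exactly this matching at $t_0$: the constants $0.96$, $2.488$, $0.95$ are close to the smallest values for which both the unconditional bound and $\mathcal{M}(\,\cdot\,;t_0)$ come in below them, so the thresholds of Theorem~\ref{thm:main} sit essentially at the crossover. Carrying the verification through at $t_0$ may therefore call for the sharper underlying statements in place of their rounded forms --- for instance \cite[Theorem~2.2]{TrudgianImpr2} rather than \eqref{eq:boundS1} --- or for a small overlap of the two ranges; this is bookkeeping with logarithms, not a conceptual difficulty.
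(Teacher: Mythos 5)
Your proposal is exactly the paper's argument: the corollary is stated there with only a one-line justification, namely combining Theorem~\ref{thm:main} (together with the monotone decrease of $\mathcal{M}(a,b,c;t)$ in $t$) on $[t_0,\infty)$ with the unconditional bounds \eqref{eq:boundS} (supplemented by $|S(t)|\leq0.28\log t$ on $2\pi\leq t\leq10^{8}$), \eqref{eq:boundS1}, and \eqref{eq:hiary} on the initial ranges, which is precisely your decomposition. Your closing caveat is in fact where all the content lies: the matching is genuinely tight --- for instance \eqref{eq:hiary} at $t=10^{40}$ already exceeds $\exp\left(0.95\log t/\log\log t\right)$, and \eqref{eq:boundS1} at $t=10^{208}$ exceeds $2.488\log t/(\log\log t)^2$ --- so the window just below the stated thresholds must be treated with the unrounded conditional estimates such as \eqref{eq:CLfinal} on a small overlap, exactly the remedy you anticipate, and the constants $2.488$ and $0.95$ leave essentially no slack there.
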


These bounds are better than unconditional ones for $t\geq10^{2510}$, $t\geq10^{208.4}$ and $t\geq10^{72}$, respectively.

Our proof of Theorem~\ref{thm:main} for $S(t)$ and $S_1(t)$ follows Fujii's elaborations on Selberg's method. However, we are working with the parameterized function $\sigma_{x,\alpha}$, see Lemma~\ref{lem:selberg} for details, which in turn improves Fujii's values for $C_0$ and $C_1$; we obtain $C_0=0.543$ and $C_1=0.337$ with fully explicit $o$-terms. Although this small modification still produced weaker constants compared to the best known ones, the method is simple enough to handle error terms with ease. It may be interesting to see if even further refinements of this classical method are possible as well as to investigate potential improvements of unconditional results in~\cite{KaratsubaKorolevArgument}. Proof for the modulus of $\zeta$ on the critical line follows~\cite[pp.~985--987]{SoundMoments}, therefore we obtain $C_{00}=0.373$ with a fully explicit $o$-term.

It should be noted that there exists also an iterative generalization of $S_1(t)$, namely
\[
S_n(t) \de \int_{0}^{t}S_{n-1}(u)\dif{u} + \delta_n
\]
for $t>0$ and $n\geq1$, where $S_0(u)\de S(u)$ and $\delta_n$ are certain constants, see, e.g.,~\cite{Carneiro}. It is known that RH implies
\begin{equation}
\label{eq:SnRH}
\left|S_n(t)\right| \leq \left(C_n+o(1)\right)\frac{\log{t}}{\left(\log{\log{t}}\right)^{n+1}}
\end{equation}
for $C_n>0$, $t$ large and implied constants depend on $n$, a result also due to Littlewood~\cite[Theorem 11]{LittlewoodOnTheZeros}. It is also known that RH is equivalent to the assertion that $S_n(t)=o\left(t^{n-2}\right)$ for every $n\geq3$, see~\cite[Theorem 4]{FujiiOnTheZeros}. Explicit values for $C_n$ were firstly obtained by Wakasa~\cite{Wakasa} by following Fujii's work, and significantly improved in~\cite{Carneiro}. Although there is no serious obstacle to derive~\eqref{eq:SnRH} in fully explicit form with constants $C_n$ not worse than those in~\cite{Wakasa}, we are not considering this in the present paper.

One immediate application of Theorem~\ref{thm:main} is linked to gaps between ordinates of consecutive nontrivial zeros. It follows from~\eqref{eq:RvM} and~\eqref{eq:Sclassic} that such gaps are bounded, with the first gap being the largest, see~\cite[Lemma 3]{Sim.Lehmer}. Littlewood~\cite{LittlewoodTwoNotes} proved that
\begin{equation}
\label{eq:LittlewoodGaps}
\gamma'-\gamma \leq \frac{32}{\log{\log{\log{\gamma}}}}
\end{equation}
for the ordinates of two consecutive nontrivial zeros $\gamma'\geq\gamma\geq T$ with $T$ large, see also~\cite[pp.~224--227]{Titchmarsh}. The constant in~\eqref{eq:LittlewoodGaps} was improved to $\pi/2+o(1)$ in~\cite[Theorem 1]{HallHayman}. On RH we have $\gamma'-\gamma\ll 1/\log{\log{\gamma}}$ by~\eqref{eq:RvM} and~\eqref{eq:SRH}, and~\cite[Corollary 1]{GoldstonGonek} asserts further that $\gamma'-\gamma\leq \left(\pi+o(1)\right)/\log{\log{\gamma}}$. Theorem~\ref{thm:main} enables us to state a similar result in a completely explicit form.

\begin{corollary}
\label{cor:gaps}
Assume the Riemann Hypothesis. Then
\[
\gamma'-\gamma \leq \mathcal{M}\left(9.55,253.82,0.285;\gamma\right)\frac{1}{\log{\log{\gamma}}},
\]
where $\gamma'\geq\gamma\geq10^{2465}$ are the ordinates of two consecutive nontrivial zeros of the Riemann zeta-function, and $\mathcal{M}$ is defined by~\eqref{eq:M}. In particular, $\gamma'-\gamma \leq 12.05/\log{\log{\gamma}}$ for $\gamma'\geq\gamma\geq10^{2465}$.
\end{corollary}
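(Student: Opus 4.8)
The plan is to deduce Corollary~\ref{cor:gaps} directly from the Riemann--von Mangoldt formula~\eqref{eq:RvM} together with the explicit bound for $S(t)$ furnished by Theorem~\ref{thm:main}. Suppose $\gamma'\geq\gamma\geq10^{2465}$ are ordinates of two consecutive nontrivial zeros, so that $N(T)$ is constant on the open interval $(\gamma,\gamma')$. Writing~\eqref{eq:RvM} at $T=\gamma'-\delta$ and $T=\gamma+\delta$ for small $\delta>0$ and letting $\delta\downarrow0$, the jump $N(\gamma')-N(\gamma)\geq1$ coming from the multiplicity of the zero at $\gamma'$ is carried by the right-hand side. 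The smooth main term $\tfrac{T}{2\pi}\log\tfrac{T}{2\pi e}$ contributes, by the mean value theorem, $\tfrac{1}{2\pi}\log\tfrac{\xi}{2\pi}(\gamma'-\gamma)$ for some $\xi\in(\gamma,\gamma')$, which is at most $\tfrac{1}{2\pi}\log\tfrac{\gamma'}{2\pi}(\gamma'-\gamma)$; and $R(T)=O(1/T)$ contributes a negligible amount, bounded by $\varepsilon_0/\gamma$ for an explicit constant, hence at most (say) $10^{-2000}$ in our range. Rearranging, one gets
\[
\gamma'-\gamma \;\leq\; \frac{2\pi\bigl(1 - S(\gamma'^{-}) + S(\gamma^{+})\bigr) + o(1)}{\log(\gamma'/(2\pi))} \;\leq\; \frac{2\pi\bigl(1 + |S(\gamma')| + |S(\gamma)|\bigr) + o(1)}{\log(\gamma/(2\pi))},
\]
where I have used monotonicity of the denominator and absorbed the one-sided limits into the pointwise bound (which is permissible since the bound in Theorem~\ref{thm:main} is for the symmetrised value and dominates each one-sided limit up to the same estimate).

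Next I would feed in the estimate $|S(t)|\leq\mathcal{M}(0.759282,20.1911,0.285;t)\tfrac{\log t}{\log\log t}$ from Theorem~\ref{thm:main}, valid for $t\geq10^{2465}$. Since both $\gamma,\gamma'\geq10^{2465}$ and the function $t\mapsto\mathcal{M}(\cdots;t)\tfrac{\log t}{\log\log t}$ is increasing for $t$ in this range (the factor $\log t/\log\log t$ is increasing and the correction term in $\mathcal{M}$ only makes the coefficient smaller), I can bound $|S(\gamma')|$ crudely in terms of $\gamma'$; but to get a clean statement in terms of $\gamma$ alone I would instead note that the ratio $\log\gamma'/\log\gamma$ and $\log\log\gamma'/\log\log\gamma$ are both close to $1$ whenever $\gamma'-\gamma$ is bounded — which it is, a priori, by~\eqref{eq:Sclassic} and~\eqref{eq:boundS}, or more simply by the unconditional gap bound implicit in~\cite{Sim.Lehmer}. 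With $\gamma'=\gamma+O(1)$ one has $\log\gamma'=\log\gamma+O(1/\gamma)$ and $\log\log\gamma'=\log\log\gamma+O(1/(\gamma\log\gamma))$, so replacing $\gamma'$ by $\gamma$ throughout changes the right-hand side by a quantity that is absorbed into the $o(1)$ already present. This yields
\[
\gamma'-\gamma \;\leq\; \frac{2\pi\bigl(1 + 2\,\mathcal{M}(0.759282,20.1911,0.285;\gamma)\tfrac{\log\gamma}{\log\log\gamma}\bigr) + o(1)}{\log\gamma - \log(2\pi)}.
\]

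The final step is bookkeeping: simplify the displayed expression into the form $\mathcal{M}(9.55,253.82,0.285;\gamma)/\log\log\gamma$. The dominant term is $2\pi\cdot 2\cdot 0.759282\,\tfrac{\log\gamma}{\log\log\gamma}\cdot\tfrac{1}{\log\gamma}= \tfrac{4\pi\cdot0.759282}{\log\log\gamma}\approx\tfrac{9.54}{\log\log\gamma}$, which explains the constant $a=9.55$ (a hair larger, to soak up the $\log(2\pi)$ in the denominator and the residual $o(1)$). The secondary term comes from the $b$-part of $\mathcal{M}$, namely $\tfrac{4\pi\cdot20.1911}{(\log\gamma)^{0.285}\log\log\gamma}$, divided again by the $\log\gamma$ from~\eqref{eq:RvM}'s main term; one checks that at $\gamma=10^{2465}$ this, together with the $2\pi/\log\gamma$ from the "$1$" and the denominator correction, is comfortably bounded by $253.82/((\log\gamma)^{0.285}\log\log\gamma)$, and for larger $\gamma$ the inequality only improves because every stray term decays faster than $(\log\gamma)^{-0.285}$. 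Plugging in $\gamma=10^{2465}$ to the resulting $\mathcal{M}(9.55,253.82,0.285;\gamma)/\log\log\gamma$ gives the numerical value $12.05/\log\log\gamma$ quoted at the end. The main obstacle, such as it is, is not conceptual but a matter of being careful with the one-sided limits at the zero and with verifying that the transition from $\gamma'$ to $\gamma$ in the estimate for $|S(\gamma')|$ genuinely costs nothing at the claimed level of precision; a cheap \emph{a priori} gap bound (for instance $\gamma'-\gamma\leq 3$ from~\eqref{eq:boundS} in the relevant range) makes this rigorous without circularity.
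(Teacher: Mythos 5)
Your route --- the Riemann--von Mangoldt formula \eqref{eq:RvM} combined with the $S(t)$ bound of Theorem~\ref{thm:main}, applied to two consecutive ordinates --- is essentially the paper's own argument (the paper packages it as Lemma~\ref{lem:gaps}: a zero in every interval $\left[T,T+\widehat{c}/\log\log T\right]$ with $\widehat{c}$ as in \eqref{eq:C1}, using $|R(T)|\leq 1/(150T)$). The genuine problem is the ``$+1$'' you insert in the numerator. Since $N(\gamma'-\delta)=N(\gamma+\delta)$ for all small $\delta>0$, no jump of $N$ enters the comparison at all; the jump occurs at $\gamma'$ itself, and if you did include the closed endpoint it would contribute the multiplicity $m(\gamma')\geq1$ on the side that hurts you, not a term you are free to keep or discard. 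Your inequality with the $+1$ is still true (it only weakens the correct one), but you cannot afford it: the target constants are tight, because $4\pi\cdot0.759282\cdot\left(1-\log(2\pi)/\log T_0\right)^{-1}\approx9.5445$ and $4\pi\cdot20.1911\cdot\left(1-\log(2\pi)/\log T_0\right)^{-1}\approx253.81$, leaving total slack of only about $6.5\times10^{-4}$ (measured on the gap itself) at $T_0=10^{2465}$, whereas the $+1$ costs $2\pi/(\log\gamma-\log 2\pi)\approx1.11\times10^{-3}$ there. Concretely, your displayed bound evaluates at $\gamma=10^{2465}$ to about $1.3944$, while $\mathcal{M}(9.55,253.82,0.285;\gamma)/\log\log\gamma\approx1.3939$; so the claim that the stray terms are ``comfortably bounded'' by the $b$-part is numerically false, and the stated constants do not follow from your argument as written (the deficit persists up to $\gamma$ around $10^{4700}$). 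Dropping the $1$, which the correct open-interval accounting forces anyway, repairs this and recovers exactly the paper's constants and the value $12.05$ at $\gamma=10^{2465}$.

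Two smaller slips to fix. First, the mean-value step must be used in the other direction: you need the lower bound
\[
\frac{1}{2\pi}\left[T\log\frac{T}{2\pi e}\right]_{\gamma}^{\gamma'}\;\geq\;\frac{\gamma'-\gamma}{2\pi}\log\frac{\gamma}{2\pi},
\]
not the upper bound with $\log(\gamma'/2\pi)$ that you state (you then silently use the correct denominator, but the justification as written is backwards). Second, the assertion that the bound of Theorem~\ref{thm:main}, which concerns the symmetrised value $S(\gamma)$, ``dominates each one-sided limit'' is not a valid inference --- boundedness of the average does not bound each limit; instead simply bound $S(\gamma+\delta)$ and $S(\gamma'-\delta)$ for $\delta>0$, where the theorem applies directly, and let $\delta\downarrow0$ only in the continuous terms. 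Your replacement of $\gamma'$ by $\gamma$ via a cheap a priori gap bound is fine and parallels the paper's use of $\log(T+H)\leq\log T+H/T$ inside Lemma~\ref{lem:gaps}.
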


The outline of this paper is as follows. In Section~\ref{sec:boundsI} we use the formulae of Selberg and Soundararajan to derive explicit approximations of $S(t)$, $S_1(t)$, and $\log{\left|\zeta\left(1/2+\ie t\right)\right|}$ by segments of Dirichlet series, see Theorems~\ref{thm:selberg},~\ref{thm:selberg2}, and~\ref{thm:logzeta}. In Section~\ref{sec:boundsII} we estimate certain sums over prime numbers in order to provide upper bounds for $\left|S(t)\right|$, $\left|S_1(t)\right|$, and $\left|\zeta\left(1/2+\ie t\right)\right|$, see Corollaries~\ref{cor:S},~\ref{cor:S1}, and~\ref{cor:sound}. In Section~\ref{sec:proof} we then use results from Section~\ref{sec:boundsII} to obtain the proofs of Theorem~\ref{thm:main} and Corollary~\ref{cor:gaps}.

\section{Derivation of general bounds I}
\label{sec:boundsI}

Useful representations of $S(t)$ and $S_1(t)$ are through integration, namely
\begin{equation}
\label{eq:intS}
\pi S(t) = -\int_{\frac{1}{2}}^{\infty}\Im\left\{\frac{\zeta'}{\zeta}(s)\right\}\dif{\sigma}
\end{equation}
and
\begin{equation}
\label{eq:intS1}
\pi S_1(t) = -\mathrm{p.v.}\int_{\frac{1}{2}}^{\infty}\log{\left|\zeta(\sigma)\right|}\dif{\sigma} - \int_{\frac{1}{2}}^{\infty} \left(\sigma-\frac{1}{2}\right)\Re\left\{\frac{\zeta'}{\zeta}\left(s\right)\right\}\dif{\sigma}
\end{equation}
for any nonzero $t$, see Lemmas~4 and 8 of Chapter II in~\cite{KaratsubaKorolevArgument} for detailed proofs. Selberg~\cite[Lemma 2]{SelbergOnTheNormal} discovered a profound connection between the logarithmic derivative of the Riemann zeta-function and special truncated Dirichlet series. For $x\geq2$ and a positive integer $n$ define
\[
\Lambda_x(n) \de \left\{
\begin{array}{ll}
    \Lambda(n), & 1\leq n\leq x, \\
    \Lambda(n)\frac{\log{\left(x^2/n\right)}}{\log{x}}, & x< n\leq x^2,
\end{array}
\right.
\]
where $\Lambda(n)$ is the von Mangoldt function. Observe that $\Lambda_x(n)\leq\Lambda(n)$. Then
\begin{multline}
\label{eq:selberg}
\frac{\zeta'}{\zeta}(s) = -\sum_{n\leq x^2}\frac{\Lambda_x(n)}{n^s} + \frac{x^{2(1-s)}-x^{1-s}}{(1-s)^2\log{x}} \\
+ \frac{1}{\log{x}}\sum_{q=1}^{\infty}\frac{x^{-2q-s}-x^{-2(2q+s)}}{(2q+s)^2} + \frac{1}{\log{x}}\sum_{\rho}\frac{x^{\rho-s}-x^{2\left(\rho-s\right)}}{\left(s-\rho\right)^2}
\end{multline}
for $s\notin\{1\}\cup\left\{-2q\colon q\in\N\right\}$ and $s\neq\rho$, where $\rho$ is a nontrivial zero. For the proof of~\eqref{eq:selberg} see, e.g.,~\cite[Theorem 14.20]{Titchmarsh}. This formula is true without any hypothesis, but treatment of the series through the nontrivial zeros is much simpler if we assume RH.

Equality~\eqref{eq:selberg} is used in the following two sections to obtain bounds on $\left|S(t)\right|$ and $\left|S_1(t)\right|$. It could be also used for estimating the modulus of $\zeta$ on the critical line. However, while studying moments of $\zeta$ on the critical line, Soundararajan discovered in~\cite[Lemma 1]{SoundMoments} an even better approach, namely through a similar and unconditional formula
\begin{multline}
\label{eq:sound}
-\frac{\zeta'}{\zeta}(s) = \sum_{n\leq x}\frac{\Lambda(n)\log{\frac{x}{n}}}{n^{s}\log{x}} + \frac{1}{\log{x}}\left(\frac{\zeta'}{\zeta}(s)\right)'-\frac{x^{1-s}}{(1-s)^2\log{x}} \\
+ \frac{1}{\log{x}}\sum_{q=1}^{\infty}\frac{x^{-2q-s}}{(2q+s)^2} + \frac{1}{\log{x}}\sum_{\rho}\frac{x^{\rho-s}}{\left(\rho-s\right)^2},
\end{multline}
valid for $s\notin\{1\}\cup\left\{-2q\colon q\in\N\right\}$ and $s\neq\rho$. In Section~\ref{sec:zetaCL} we use~\eqref{eq:sound} instead of~\eqref{eq:selberg} in order to obtain an explicit estimate~\eqref{eq:zetaRH} with $C_{00}=0.373$.

It is well known that $\zeta'(s)/\zeta(s)-\sum_{\rho}\left(s-\rho\right)^{-1}=O\left(\log{t}\right)$, which is a consequence of the functional equation and the Weierstrass product representation of $\xi(s)$, and the Stirling formula for $\Gamma'(s)/\Gamma(s)$. The next lemma is an explicit version of this result and is crucial for bounding sums over $\rho$ in~\eqref{eq:selberg} and~\eqref{eq:sound}.

\begin{lemma}
\label{lem:logder}
Let $s=\sigma+\ie t$ and $s_0=\sigma_0+\ie t$. For $|\sigma|\leq 2$, $\left|\sigma_0\right|\leq 2$, $|t|\geq10$, $s\neq\rho$ and $s_0\neq\rho$ we have
\[
\left|\frac{\zeta'}{\zeta}(s) - \sum_{\rho}\frac{1}{s-\rho}\right| \leq \frac{1}{2}\log{|t|} + 3
\]
and
\[
\left|\frac{\zeta'}{\zeta}(s) - \frac{\zeta'}{\zeta}\left(s_0\right) - \sum_{\rho}\left(\frac{1}{s-\rho}-\frac{1}{s_0-\rho}\right)\right| \leq
\frac{\left|\sigma-\sigma_0\right|}{|t|}\left(\frac{\pi}{4}+\frac{2}{|t|}\right).
\]
\end{lemma}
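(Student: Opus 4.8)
The plan is to derive both inequalities from the partial-fraction expansion of $\zeta'/\zeta$ coming from the Hadamard product for $\xi(s)=\tfrac12 s(s-1)\pi^{-s/2}\Gamma(s/2)\zeta(s)$. Logarithmic differentiation of $\xi(s)=e^{A+Bs}\prod_{\rho}\left(1-s/\rho\right)e^{s/\rho}$, combined with the explicit form of $\xi'/\xi$, gives
\[
\frac{\zeta'}{\zeta}(s) = B - \frac{1}{s-1} + \frac{1}{2}\log\pi - \frac{1}{2}\frac{\Gamma'}{\Gamma}\!\left(\frac{s}{2}+1\right) + \sum_{\rho}\left(\frac{1}{s-\rho}+\frac{1}{\rho}\right),
\]
the sum being taken in the symmetric sense $\lim_{T\to\infty}\sum_{|\gamma|\le T}$. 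Because $1-\rho$ is a zero whenever $\rho$ is, the functional equation $\xi(s)=\xi(1-s)$ forces $B=-\sum_{\rho}1/\rho$ (no hypothesis needed), so the constant cancels the term $\sum_\rho 1/\rho$ and one is left with the exact identity
\[
\frac{\zeta'}{\zeta}(s) - \sum_{\rho}\frac{1}{s-\rho} = -\frac{1}{s-1} + \frac{1}{2}\log\pi - \frac{1}{2}\frac{\Gamma'}{\Gamma}\!\left(\frac{s}{2}+1\right).
\]
Everything then reduces to estimating the right-hand side.

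For the first inequality, $\left|1/(s-1)\right|\le 1/|t|\le\tfrac1{10}$ since $|s-1|\ge|t|$, and $\tfrac12\log\pi$ is absolute. For the digamma term I would use an explicit Stirling (Binet) estimate, valid for $\Re z\ge0$, $z\neq0$, to write $\tfrac{\Gamma'}{\Gamma}(s/2+1)=\log(s/2+1)+O(1/|t|)$ — if one prefers to stay off the imaginary axis, first invoke $\tfrac{\Gamma'}{\Gamma}(s/2+1)=\tfrac{\Gamma'}{\Gamma}(s/2+2)-(s/2+1)^{-1}$, whose shifted argument has real part $\ge1$. Hence $\bigl|\tfrac{\Gamma'}{\Gamma}(s/2+1)\bigr|\le\log|s/2+1|+\bigl|\arg(s/2+1)\bigr|+O(1/|t|)$; here the hypothesis $|\sigma|\le2$ is exactly what keeps $s/2+1$ in the closed right half-plane, so $|\arg(s/2+1)|\le\pi/2$, while $|s/2+1|^2=(\sigma/2+1)^2+t^2/4\le(t^2+16)/4$ together with $t^2\ge100$ gives $\log|s/2+1|\le\log|t|+O(1)$. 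Collecting the (generous) numerical constants yields $\tfrac12\log|t|+3$.

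For the second inequality I would apply the identity above at $s$ and at $s_0$, which share the imaginary part $t$; then $B$, $\tfrac12\log\pi$ and the sum over $\rho$ cancel in the difference, leaving
\[
\frac{\zeta'}{\zeta}(s)-\frac{\zeta'}{\zeta}(s_0)-\sum_{\rho}\left(\frac{1}{s-\rho}-\frac{1}{s_0-\rho}\right) = -\frac{1}{s-1}+\frac{1}{s_0-1}-\frac{1}{2}\left(\frac{\Gamma'}{\Gamma}\!\left(\frac{s}{2}+1\right)-\frac{\Gamma'}{\Gamma}\!\left(\frac{s_0}{2}+1\right)\right).
\]
The first two terms combine to $(\sigma-\sigma_0)/((s-1)(s_0-1))$, of modulus at most $|\sigma-\sigma_0|/t^2$. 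The digamma difference equals $\int\left(\tfrac{\Gamma'}{\Gamma}\right)'(z)\,dz$ along the \emph{horizontal} segment from $s_0/2+1$ to $s/2+1$ (legitimate since both endpoints have imaginary part $t/2$), a segment of length $|\sigma-\sigma_0|/2$ on which $\Re z\in[0,2]$; there $\left(\tfrac{\Gamma'}{\Gamma}\right)'(z)=\sum_{n\ge0}(z+n)^{-2}$ has modulus at most $\sum_{n\ge0}\left(n^2+t^2/4\right)^{-1}\le 4/t^2+\pi/|t|$ (splitting off $n=0$ and comparing the tail with $\int_0^\infty(x^2+t^2/4)^{-1}\,dx$). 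Multiplying through and using $|\sigma-\sigma_0|\le4\le2|t|$ to absorb the smaller terms produces exactly the stated bound $\frac{2}{|t|}\left(1+\frac{|\sigma-\sigma_0|}{2}\left(\frac{\pi}{4}+\frac{1}{|t|}\right)\right)$.

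The constructions are standard and the arithmetic is undemanding because of the slack in the constants. The one place calling for genuine care — and the main obstacle — is producing fully explicit bounds for $\Gamma'/\Gamma$ and for $(\Gamma'/\Gamma)'$ that remain valid right down to $\Re z=0$, where the usual Stirling asymptotics are slightly delicate; this is handled either by the recursion $\tfrac{\Gamma'}{\Gamma}(w)=\tfrac{\Gamma'}{\Gamma}(w+1)-1/w$ or by checking that Binet's integral representation persists on $\Re z\ge0$, $z\neq0$. A secondary point is to be precise about the summation convention under which $B=-\sum_\rho 1/\rho$ holds and under which $\sum_\rho(s-\rho)^{-1}$ converges.
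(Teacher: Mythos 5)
Your proposal is correct and is essentially the paper's own argument: your Hadamard-product identity is exactly the identity the paper extracts from Karatsuba--Korol\"{e}v's proof (their series $\sum_{n\geq1}\bigl(\tfrac{1}{s+2n}-\tfrac{1}{2n}\bigr)$ is just $\tfrac{\gamma_0}{2}-\tfrac12\tfrac{\Gamma'}{\Gamma}\bigl(\tfrac{s}{2}+1\bigr)$), and your estimate $\sum_{n\geq0}\bigl(n^2+t^2/4\bigr)^{-1}\leq 4/t^2+\pi/|t|$ is the same integral comparison the paper uses to produce the factor $\tfrac{\pi}{4}+\tfrac{1}{|t|}$. The only deviations are minor: you combine the two pole terms into $(\sigma-\sigma_0)/((s-1)(s_0-1))$ and absorb them into the $2/|t|$ term (the paper bounds each pole term separately by $1/|t|$, which is where its $2/|t|$ comes from), and you sketch an explicit Stirling/Binet proof of the first inequality, which the paper simply cites from Karatsuba--Korol\"{e}v.
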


\begin{proof}
The first inequality is~\cite[Lemma 2 on p.~441]{KaratsubaKorolevArgument}. By the course of their proof we have
\begin{equation}
\label{eq:KKlogder}
\frac{\zeta'}{\zeta}(s) - \sum_{\rho}\frac{1}{s-\rho} = \sum_{n=1}^{\infty} \left(\frac{1}{s+2n}-\frac{1}{2n}\right) + \frac{\gamma_0+\log{\pi}}{2} + \frac{1}{1-s},
\end{equation}
where $\gamma_0$ is the Euler--Mascheroni constant. Because
\[
\left|s+2n\right| = \sqrt{\left(\sigma+2n\right)^2+t^2} \geq \sqrt{\left(2n-2\right)^2+t^2}
\]
with the same inequality also for $s_0$ in place of $s$, we obtain
\begin{flalign*}
\left|\sum_{n=1}^{\infty} \left(\frac{1}{s+2n}-\frac{1}{s_0+2n}\right)\right| &\leq \left|\sigma-\sigma_0\right|\sum_{n=1}^{\infty}\frac{1}{\left|s+2n\right|\cdot\left|s_0+2n\right|} \\
&\leq \frac{\left|\sigma-\sigma_0\right|}{t^2}\sum_{n=1}^{\infty}\frac{1}{1+\left(\frac{2n-2}{|t|}\right)^2} \\
&\leq \frac{\left|\sigma-\sigma_0\right|}{t^2}\left(1+\int_{1}^{\infty}\frac{\dif{x}}{1+\left(\frac{2x-2}{|t|}\right)^2}\right) \\
&\leq \frac{\left|\sigma-\sigma_0\right|}{t^2}\left(1+\frac{\pi|t|}{4}\right).
\end{flalign*}
Also,
\[
\left|\frac{1}{1-s}-\frac{1}{1-s_0}\right| = \frac{\left|\sigma-\sigma_0\right|}{\left|1-s\right|\cdot\left|1-s_0\right|} \leq \frac{\left|\sigma-\sigma_0\right|}{t^2}.
\]
The second inequality now follows from~\eqref{eq:KKlogder}.
\end{proof}

The following lemma gives an explicit form of the ``remainder term'' of~\eqref{eq:selberg} under RH, and should be compared with~\cite[Lemma 2]{FujiiANote}. It is used in Sections~\ref{sec:S} and~\ref{sec:S1}.

\begin{lemma}
\label{lem:selberg}
Assume the Riemann Hypothesis. For $\alpha>0$ define
\[
\sigma_{x,\alpha}\de \frac{1}{2} + \frac{\alpha}{\log{x}}, \quad r(x,t,\alpha) \de \sum_{n\leq x^2}\frac{\Lambda_x(n)}{n^{\sigma_{x,\alpha}+\ie t}}.
\]
Let $1\leq\alpha\leq2$, $t\geq10$, $e^{4/3}\leq x_0\leq x\leq t^2$, and $\sigma\geq\sigma_{x,\alpha}$. Then we have
\begin{flalign}
\left|\frac{\zeta'}{\zeta}(s)+\sum_{n\leq x^2}\frac{\Lambda_x(n)}{n^s}\right| &\leq
\frac{e^{\alpha}\left(1+x^{\frac{1}{2}-\sigma}\right)x^{\frac{1}{2}-\sigma}}{\alpha e^{\alpha}-1-e^{-\alpha}}\left(\left|r(x,t,\alpha)\right|+\frac{1}{2}\log{t}\right) \nonumber \\
&+\frac{\alpha e^{\alpha}A\left(x_0\right)+3\left(1+e^{\alpha}\right)}{\alpha e^{\alpha}-1-e^{-\alpha}}
x^{\frac{1}{2}-\sigma}, \label{eq:main2}
\end{flalign}
where
\begin{equation}
\label{eq:A}
A\left(x_0\right)\de \frac{1}{\log{x_0}}\left(1+\frac{1}{\sqrt{x_0}} + \frac{\left(x_0^{2}+1\right)\sqrt{x_0}+1}{x_0^2\left(x_0^4-1\right)}\right).
\end{equation}
\end{lemma}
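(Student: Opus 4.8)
The plan is to start from Selberg's formula~\eqref{eq:selberg} evaluated at $s = \sigma + \ie t$ with $\sigma \geq \sigma_{x,\alpha}$, which gives
\[
\frac{\zeta'}{\zeta}(s) + \sum_{n\leq x^2}\frac{\Lambda_x(n)}{n^s} = \frac{x^{2(1-s)}-x^{1-s}}{(1-s)^2\log{x}} + \frac{1}{\log{x}}\sum_{q=1}^{\infty}\frac{x^{-2q-s}-x^{-2(2q+s)}}{(2q+s)^2} + \frac{1}{\log{x}}\sum_{\rho}\frac{x^{\rho-s}-x^{2(\rho-s)}}{(s-\rho)^2},
\]
so the task reduces to bounding the three terms on the right. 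The first term is elementary: since $|1-s| \geq |t| \geq 10$ and $|x^{2(1-s)}| = x^{2(1-\sigma)} \leq x^{1-2\sigma}\cdot x \leq x^{2-2\sigma}$, one gets a bound of size $O(x^{1-\sigma}/(t^2\log x))$, which is swallowed by the $x^{1/2-\sigma}$ term on the right of~\eqref{eq:main2} once one notes $x^{1-\sigma} = x^{1/2-\sigma}\cdot x^{1/2}$ and $x \leq t^2$; careful bookkeeping feeds this into the $A(x_0)$ constant via the $1/\log x_0$ factor. The second (trivial-zeros) sum is a convergent geometric-type series dominated by $x^{-\sigma}/(\log x)\sum_q x^{-2q}/q^2 \ll x^{-\sigma}/(x^2\log x)$, again absorbed into $A(x_0)$; the precise shape of~\eqref{eq:A} is just the sum of these explicit elementary estimates evaluated with $x \geq x_0$.

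The heart of the matter is the sum over zeros $\frac{1}{\log x}\sum_\rho \frac{x^{\rho-s}-x^{2(\rho-s)}}{(s-\rho)^2}$. Under RH, $\rho = 1/2 + \ie\gamma$, so $|x^{\rho-s}| = x^{1/2-\sigma}$ and $|x^{2(\rho-s)}| = x^{1-2\sigma} \leq x^{1/2-\sigma}$ (using $\sigma \geq 1/2$), hence the numerator is at most $(1 + x^{1/2-\sigma})x^{1/2-\sigma}$ in absolute value, uniformly in $\rho$. Pulling this out, the remaining object to control is $\frac{1}{\log x}\sum_\rho \frac{1}{|s-\rho|^2}$. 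The standard trick here is to relate this to the real part of $\frac{\zeta'}{\zeta}$ minus the polar/archimedean contributions: writing $s - \rho = (\sigma - 1/2) + \ie(t-\gamma)$ and noting $\sigma - 1/2 \geq \alpha/\log x$, one has $\frac{1}{|s-\rho|^2} = \frac{1}{\sigma-1/2}\Re\frac{1}{s-\rho}$, so
\[
\frac{1}{\log x}\sum_\rho \frac{1}{|s-\rho|^2} = \frac{1}{(\sigma - 1/2)\log x}\Re\sum_\rho \frac{1}{s-\rho} \leq \frac{1}{\alpha}\Re\sum_\rho\frac{1}{s-\rho}.
\]
Now Lemma~\ref{lem:logder} (first inequality) gives $\Re\sum_\rho \frac{1}{s-\rho} \leq \Re\frac{\zeta'}{\zeta}(s) + \frac12\log t + 3$, and $\Re\frac{\zeta'}{\zeta}(s) \leq |\frac{\zeta'}{\zeta}(s)|$, which by~\eqref{eq:selberg} again (or more directly) is controlled by $|\sum_{n\leq x^2}\Lambda_x(n)n^{-s}| + (\text{the remainder we are bounding})$. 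To close this potential circularity, evaluate the Dirichlet sum not at $s$ but at $\sigma_{x,\alpha} + \ie t$ — since $\Lambda_x(n) \geq 0$ and $\sigma \geq \sigma_{x,\alpha}$, we have $|\sum \Lambda_x(n)n^{-s}| \leq \sum \Lambda_x(n)n^{-\sigma_{x,\alpha}} = |r(x,t,\alpha)|$ is \emph{not} quite right (the sum $r$ is not the sum of absolute values), so instead one bounds $\Re\frac{\zeta'}{\zeta}(s)$ by combining~\eqref{eq:selberg} at $s$ with the triangle inequality and then re-examines: actually the cleaner route is to first establish the bound at $\sigma = \sigma_{x,\alpha}$ using that $\Re\sum_\rho\frac{1}{s-\rho} \geq 0$ termwise (each $\Re\frac{1}{s-\rho} = \frac{\sigma-1/2}{|s-\rho|^2} > 0$) so that $0 \leq \Re\sum_\rho\frac{1}{s-\rho} \leq |\frac{\zeta'}{\zeta}(s)| + \frac12\log t + 3 \leq |r(x,t,\alpha)| + (\text{remainder}) + \frac12\log t + 3$, then solve the resulting linear inequality for the remainder.

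Concretely, the main obstacle — and the step requiring real care — is this self-referential estimate: the remainder $\mathcal{R} \de |\frac{\zeta'}{\zeta}(s) + \sum_{n\leq x^2}\Lambda_x(n)n^{-s}|$ appears on both sides, because bounding the zero-sum needs a bound on $\frac{\zeta'}{\zeta}(s)$ itself. After substituting $|\frac{\zeta'}{\zeta}(s)| \leq \mathcal{R} + |\sum\Lambda_x(n)n^{-s}|$ and collecting terms, one arrives at an inequality of the form $\mathcal{R} \leq c_1(x,\sigma)\big(\mathcal{R} + |r| + \frac12\log t\big) + c_2(x,\sigma)$ where $c_1 = \frac{(1+x^{1/2-\sigma})x^{1/2-\sigma}}{\alpha}$; rearranging gives $\mathcal{R}(1 - c_1) \leq c_1(|r| + \frac12\log t) + c_2$, and one must check $1 - c_1 > 0$ and that dividing produces exactly the coefficient $\frac{e^\alpha(1+x^{1/2-\sigma})x^{1/2-\sigma}}{\alpha e^\alpha - 1 - e^{-\alpha}}$ claimed in~\eqref{eq:main2}. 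The denominator $\alpha e^\alpha - 1 - e^{-\alpha}$ suggests that the bound is first derived at $\sigma = \sigma_{x,\alpha}$ (where $x^{1/2-\sigma} = e^{-\alpha}$, so $c_1 = e^{-\alpha}(1+e^{-\alpha})/\alpha$ and $1-c_1 = (\alpha e^\alpha - 1 - e^{-\alpha})/(\alpha e^\alpha)$ after multiplying through by $e^\alpha/e^{-\alpha}\cdot$ appropriately), giving the stated constants with the factor $e^\alpha$ emerging from the rearrangement; the passage to general $\sig\geq \sigma_{x,\alpha}$ is then handled by monotonicity (larger $\sigma$ only shrinks $x^{1/2-\sigma}$), keeping the $\sigma$-dependent factor $(1+x^{1/2-\sigma})x^{1/2-\sigma}$ explicit while freezing the denominator at its $\sigma_{x,\alpha}$-value. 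The remaining work — verifying that the archimedean and trivial-zero contributions assemble precisely into $A(x_0)$ and the $3(1+e^\alpha)$ term — is routine but must be done with the stated constants in~\eqref{eq:A} in hand, using $x \geq x_0 \geq e^{4/3}$ and $x \leq t^2$ throughout.
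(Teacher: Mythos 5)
Your overall strategy matches the paper's: apply Selberg's formula \eqref{eq:selberg} under RH, absorb the pole term and the trivial-zero series into $A(x_0)$ (using $x\geq x_0$ and $x\leq t^2$), bound the sum over zeros by the nonnegative density $F$ from \eqref{eq:F}, relate $F$ to $\zeta'/\zeta$ via the first inequality of Lemma~\ref{lem:logder}, specialize to $\sigma=\sigma_{x,\alpha}$ (where $x^{1/2-\sigma}=e^{-\alpha}$) to obtain a self-referential linear inequality, verify its solvability for $\alpha\in[1,2]$, and solve; the denominator $\alpha e^{\alpha}-1-e^{-\alpha}$ arises exactly as you say. The paper solves the linear inequality for $F\left(s_{x,\alpha}\right)$ (its \eqref{eq:important}) rather than for the remainder $\mathcal{R}$, which is an equivalent bookkeeping choice.

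One step is stated in a form that would not close as written. For general $\sigma\geq\sigma_{x,\alpha}$ you bound the zero-sum by $\frac{1}{\alpha}\Re\sum_{\rho}\frac{1}{s-\rho}=\frac{1}{\alpha}F(s)$, with $F$ evaluated at $s$ itself; but your $\sigma_{x,\alpha}$-step only controls $F\left(s_{x,\alpha}\right)$, and $F(s)\leq F\left(s_{x,\alpha}\right)$ is false in general (the termwise difference has the sign of $(t-\gamma)^2-\left(\sigma-\tfrac12\right)\left(\sigma_{x,\alpha}-\tfrac12\right)$, so distant zeros contribute more at larger $\sigma$). Nor can $F(s)$ be handled directly: Lemma~\ref{lem:logder} requires $|\sigma|\leq 2$, and estimating it through $\left|\frac{\zeta'}{\zeta}(s)\right|$ brings in $\sum_{n\leq x^2}\Lambda_x(n)n^{-\sigma}$, of size $x/\log x$, not $|r(x,t,\alpha)|$. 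The needed comparison is termwise in the denominator of the zero-sum: $|s-\rho|^{2}=\left(\sigma-\tfrac12\right)^{2}+(t-\gamma)^{2}\geq\left(\sigma_{x,\alpha}-\tfrac12\right)^{2}+(t-\gamma)^{2}$, which gives $\frac{1}{\log x}\sum_{\rho}|s-\rho|^{-2}\leq\frac{1}{\alpha}F\left(s_{x,\alpha}\right)$ directly, uniformly in $\sigma\geq\sigma_{x,\alpha}$ — this is the paper's inequality leading to \eqref{eq:main1}. Your appeal to monotonicity ("larger $\sigma$ only shrinks $x^{1/2-\sigma}$") covers the numerator factor $\left(1+x^{1/2-\sigma}\right)x^{1/2-\sigma}$ but not this point; with that one-line replacement your argument coincides with the paper's and yields the stated constants.
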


\begin{proof}
Let $s_{x,\alpha}=\sigma_{x,\alpha}+\ie t$. We are going to use Selberg's formula~\eqref{eq:selberg} with the assumption $\rho=1/2+\ie\gamma$. We have
\[
\frac{1}{\log{x}}\left|\sum_{\rho}\frac{x^{\rho-s}-x^{2\left(\rho-s\right)}}{\left(s-\rho\right)^2}\right| \leq \frac{1}{\alpha}\left(1+x^{\frac{1}{2}-\sigma}\right)x^{\frac{1}{2}-\sigma}F\left(s_{x,\alpha}\right),
\]
where
\begin{equation}
\label{eq:F}
F(s) \de \sum_{\gamma} \frac{\sigma-\frac{1}{2}}{\left(\sigma-\frac{1}{2}\right)^2+\left(t-\gamma\right)^2}.
\end{equation}
We also have
\[
\left|\frac{x^{2(1-s)}-x^{1-s}}{(1-s)^2\log{x}}\right| \leq \frac{1+x^{-\frac{1}{2}}}{\log{x}}x^{\frac{1}{2}-\sigma}
\]
and
\begin{flalign*}
\frac{1}{\log{x}}\left|\sum_{q=1}^{\infty}\frac{x^{-2q-s}-x^{-2(2q+s)}}{(2q+s)^2}\right| &\leq \frac{x^{\frac{1}{2}-\sigma}}{\log{x}}\sum_{q=1}^{\infty} \left(x^{-2q-\frac{3}{2}} + x^{-4q-2}\right) \\
&\leq\frac{x^{\frac{5}{2}}+x^{\frac{1}{2}}+1}{x^2\left(x^4-1\right)\log{x}}x^{\frac{1}{2}-\sigma}
\end{flalign*}
because $\left|1-s\right|^2\geq t^2\geq x$, $\left|2q+s\right|^2\geq t^2\geq x$, $\sigma>1/2$ and $\left|x^{1-2s}\right|\leq 1$. Therefore,
\begin{equation}
\label{eq:main1}
\left|\frac{\zeta'}{\zeta}(s)+\sum_{n\leq x^2}\frac{\Lambda_x(n)}{n^s}\right| \leq \frac{1}{\alpha}\left(1+x^{\frac{1}{2}-\sigma}\right)x^{\frac{1}{2}-\sigma}F\left(s_{x,\alpha}\right) + A\left(x_0\right)x^{\frac{1}{2}-\sigma},
\end{equation}
since $A(x)$, defined by~\eqref{eq:A}, is a decreasing function. We also have
\[
\Re\left\{\frac{1}{s-\rho}\right\} = \frac{\sigma-\frac{1}{2}}{\left(\sigma-\frac{1}{2}\right)^2+\left(t-\gamma\right)^2}.
\]
Observe that $\sigma_{x,\alpha}\leq 2$. By Lemma~\ref{lem:logder} and inequality~\eqref{eq:main1} we obtain
\begin{multline}
\label{eq:intermed}
\left|F\left(s\right) + \Re\left\{\sum_{n\leq x^2}\frac{\Lambda_x(n)}{n^s}\right\}\right| \leq \frac{1}{\alpha}\left(1+x^{\frac{1}{2}-\sigma}\right)x^{\frac{1}{2}-\sigma}F\left(s_{x,\alpha}\right) \\
+ A\left(x_0\right)x^{\frac{1}{2}-\sigma} + \frac{1}{2}\log{t} + 3
\end{multline}
for $\sigma\in\left[\sigma_{x,\alpha},2\right]$. Taking $\sigma=\sigma_{x,\alpha}$ into~\eqref{eq:intermed}, we derive
\begin{equation}
\label{eq:important}
\left(1-\frac{1+e^{-\alpha}}{\alpha e^{\alpha}}\right)F\left(s_{x,\alpha}\right) \leq
\left|r(x,t,\alpha)\right|+ e^{-\alpha}A\left(x_0\right) + \frac{1}{2}\log{t} + 3.
\end{equation}
Because
\[
1-\frac{1+e^{-\alpha}}{\alpha e^{\alpha}} \geq 1-\frac{1+e^{-1}}{e} > 0,
\]
taking~\eqref{eq:important} into~\eqref{eq:main1} gives the bound from the lemma.
\end{proof}

\subsection{Approximation of $S(t)$ by segments of Dirichlet series}
\label{sec:S}

Using Lemma~\ref{lem:selberg}, we are going to prove an explicit version of the approximation of $S(t)$ by segments of Dirichlet series, see also~\cite[Theorem 14.21]{Titchmarsh}.

\begin{theorem}
\label{thm:selberg}
Assume the Riemann Hypothesis. Let $1\leq\alpha\leq2$, $t\geq t_0\geq10$, and $e^{4/3}\leq x_0\leq x\leq t^2$. With the notations from Lemma~\ref{lem:selberg} we then have
\begin{multline*}
\left|S(t)+\frac{1}{\pi}\sum_{n\leq x^2}\frac{\Lambda_x(n)\sin{\left(t\log{n}\right)}}{n^{\sigma_{x,\alpha}}\log{n}}\right| \\
\leq a_1(\alpha)\frac{|r(x,t,\alpha)|}{\log{x}} + b_1(\alpha)\frac{\log{t}}{\log{x}} + \frac{c_1\left(x_0,\alpha\right)}{\log{x}} + \frac{d_1\left(t_0,\alpha\right)}{\log^{2}{x}},
\end{multline*}
where
\begin{gather*}
a_1(\alpha) \de \frac{\left(\pi+2\right)\alpha^2 e^{\alpha}+2\left(1+\frac{1}{2}e^{-\alpha}\right)}{2\pi\left(\alpha e^{\alpha}-1-e^{-\alpha}\right)},\;
b_1(\alpha) \de \frac{\pi\alpha^2 e^{\alpha}+2(1+\alpha)+(1+2\alpha)e^{-\alpha}}{4\pi\left(\alpha e^{\alpha}-1-e^{-\alpha}\right)}, \\
c_1\left(x_0,\alpha\right) \de \frac{\alpha\left(2+(2+\pi)\alpha\right)A\left(x_0\right)+6(1+\alpha)\left(1+e^{-\alpha}\right)+3\pi\alpha^{2}e^{\alpha}}{2\pi\left(\alpha e^{\alpha}-1-e^{-\alpha}\right)}, \\
d_1\left(t_0,\alpha\right) \de \frac{\alpha^2}{\pi t_0}\left(\frac{\pi}{4}+\frac{2}{t_0}\right),
\end{gather*}
and $A\left(x_0\right)$ is defined by~\eqref{eq:A}.
\end{theorem}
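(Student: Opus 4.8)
The plan is to start from the integral representation~\eqref{eq:intS}, namely $\pi S(t) = -\int_{1/2}^{\infty}\Im\{\zeta'/\zeta(s)\}\dif\sigma$, and split the range of integration at $\sigma_{x,\alpha}$ and at some large fixed point (say $\sigma=2$), since for $\sigma\geq 2$ the tail $\int_{2}^{\infty}\Im\{\zeta'/\zeta(s)\}\dif\sigma$ contributes exactly the convergent Dirichlet series $\sum_{n}\Lambda(n)\sin(t\log n)/(n^{\sigma}\log n)$ evaluated between the endpoints, while the integrand on $[\sigma_{x,\alpha},2]$ is handled via Lemma~\ref{lem:selberg}. Concretely, I would write $\pi S(t) = -\int_{1/2}^{\sigma_{x,\alpha}} + \left(-\int_{\sigma_{x,\alpha}}^{\infty}\right)$, express the second integral by adding and subtracting $\sum_{n\leq x^2}\Lambda_x(n)n^{-s}$, integrate the main Dirichlet term explicitly from $\sigma_{x,\alpha}$ to $\infty$ to produce the sine-sum in the statement (with the $1/\log n$ coming from the $\sigma$-integration), and bound the remaining integral of $|\zeta'/\zeta(s)+\sum_{n\leq x^2}\Lambda_x(n)n^{-s}|$ by integrating the right-hand side of~\eqref{eq:main2} over $\sigma\in[\sigma_{x,\alpha},\infty)$.

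The first piece, $\int_{1/2}^{\sigma_{x,\alpha}}\Im\{\zeta'/\zeta(s)\}\dif\sigma$, has length only $\alpha/\log x$, so I would control it by the second inequality of Lemma~\ref{lem:logder} (comparing $\zeta'/\zeta(s)$ to $\zeta'/\zeta(s_{x,\alpha})$ via the sum over $\rho$) together with the fact that on RH the sum $\sum_\rho\Re\{1/(s-\rho)\} = F(s)$ is monotone-ish in $\sigma$ and is itself bounded through~\eqref{eq:important}; the imaginary part of $\sum_\rho(1/(s-\rho))$ integrated over this short segment can be estimated using $F(s_{x,\alpha})$ and the explicit bound~\eqref{eq:important}. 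This is where the terms with $t_0$ in $c_1$ and the whole of $d_1(t_0,\alpha)$ will come from — the $2/|t|$-type error in Lemma~\ref{lem:logder}, multiplied by the segment length $\alpha/\log x$, gives the $1/\log^2 x$ term, and evaluating at $t\geq t_0$ produces the $1/t_0$ dependence.

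For the main estimate on $[\sigma_{x,\alpha},\infty)$ I would integrate~\eqref{eq:main2}: the factor $x^{1/2-\sigma}$ integrates to $x^{1/2-\sigma_{x,\alpha}}/\log x = e^{-\alpha}/\log x$, and $x^{1-2\sigma}$ integrates to $x^{-2\alpha}/(2\log x)$, so each occurrence of $x^{1/2-\sigma}$ or $(1+x^{1/2-\sigma})x^{1/2-\sigma}$ contributes an extra $1/\log x$; the pieces carrying $|r(x,t,\alpha)|$ and $\frac12\log t$ thus become $|r|/\log x$ and $\log t/\log x$ with explicit coefficients, and the $A(x_0)$- and constant-pieces become the $1/\log x$ terms in $c_1$. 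Collecting all contributions and dividing by $\pi$ then yields exactly the claimed form with the stated $a_1,b_1,c_1,d_1$; the bookkeeping is purely a matter of tracking the constants $e^{\alpha}$, $\alpha e^{\alpha}-1-e^{-\alpha}$, and the factors of $\pi$ and $2$ through each integration. I expect the main obstacle to be the careful treatment of the short segment $[1/2,\sigma_{x,\alpha}]$ and of the imaginary part of the zero-sum there: one must argue that although $\Im\{\zeta'/\zeta(s)\}$ is not directly bounded by $F(s)$, its integral over this segment is, after using the antisymmetry $\Im\{1/(s-\rho)\} = -(t-\gamma)/((\sigma-1/2)^2+(t-\gamma)^2)$ and bounding $\int_{1/2}^{\sigma_{x,\alpha}}|t-\gamma|/((\sigma-1/2)^2+(t-\gamma)^2)\dif\sigma \leq \pi/2$ uniformly, then re-summing against the density of zeros controlled by $F(s_{x,\alpha})$ and~\eqref{eq:important}. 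Assembling these bounds with the right constants is the delicate part; everything else is routine integration of the explicit terms supplied by Lemmas~\ref{lem:logder} and~\ref{lem:selberg}.
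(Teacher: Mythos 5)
Your plan is essentially the paper's own proof: the same split $\pi S(t)=-\int_{1/2}^{\sigma_{x,\alpha}}-\int_{\sigma_{x,\alpha}}^{\infty}$, with the tail handled by adding the Dirichlet polynomial and integrating the bound \eqref{eq:main2} over $\sigma\geq\sigma_{x,\alpha}$ (giving the factors $e^{-\alpha}/\log x$ you compute), and the short segment handled by comparing to $\zeta'/\zeta\left(s_{x,\alpha}\right)$ via the second inequality of Lemma~\ref{lem:logder}, the bound $\int_{1/2}^{\sigma_{x,\alpha}}\left|t-\gamma\right|\bigl(\left(\sigma-\tfrac12\right)^2+\left(t-\gamma\right)^2\bigr)^{-1}\dif{\sigma}\leq\pi/2$, and \eqref{eq:important} for $F\left(s_{x,\alpha}\right)$, which is exactly where $c_1$'s $t_0$-terms and $d_1$ arise. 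The one step you leave implicit — bounding the leftover term $\left(\alpha/\log x\right)\left|\zeta'/\zeta\left(s_{x,\alpha}\right)\right|$ by applying Lemma~\ref{lem:selberg} at $s=s_{x,\alpha}$ — is precisely the paper's estimate \eqref{eq:J2}, so the proposal is correct and follows the same route.
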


\begin{proof}
Let $s_{x,\alpha}=\sigma_{x,\alpha}+\ie t$. By~\eqref{eq:intS} we can write $\pi S(t) =  J_1 + J_2 + J_3$, where
\begin{gather*}
J_1 \de -\int_{\sigma_{x,\alpha}}^{\infty}\Im\left\{\frac{\zeta'}{\zeta}\left(s\right)\right\}\dif{\sigma}, \quad J_2 \de -\left(\sigma_{x,\alpha}-\frac{1}{2}\right)\Im\left\{\frac{\zeta'}{\zeta}\left(s_{x,\alpha}\right)\right\}, \\
J_3 \de \int_{\frac{1}{2}}^{\sigma_{x,\alpha}}\Im\left\{\frac{\zeta'}{\zeta}\left(s_{x,\alpha}\right)-\frac{\zeta'}{\zeta}\left(s\right)\right\}\dif{\sigma}.
\end{gather*}
This implies
\begin{equation}
\label{eq:S}
\left|S(t)-\frac{J_1}{\pi}\right| \leq \frac{\alpha}{\pi\log{x}}\left|\frac{\zeta'}{\zeta}\left(s_{x,\alpha}\right)\right|+\frac{\left|J_3\right|}{\pi}.
\end{equation}
The main idea is to use Lemma~\ref{lem:selberg} in combination with Lemma~\ref{lem:logder} in order to estimate the right hand side of~\eqref{eq:S}.

By Lemma~\ref{lem:selberg} for $s=s_{x,\alpha}$ we have
\begin{equation}
\label{eq:J2}
\left|\frac{\zeta'}{\zeta}\left(s_{x,\alpha}\right)\right| \leq \frac{\alpha e^{\alpha}\left|r(x,t,\alpha)\right|+\frac{1+e^{-\alpha}}{2}\log{t}+\alpha A\left(x_0\right)+3\left(1+e^{-\alpha}\right)}{\alpha e^{\alpha}-1-e^{-\alpha}},
\end{equation}
where we used the fact that $x^{1/2-\sigma_{x,\alpha}}=e^{-\alpha}$. Because
\[
\Im\left\{\frac{1}{s-\rho}\right\} = \frac{\gamma-t}{\left(\sigma-\frac{1}{2}\right)^2+\left(t-\gamma\right)^2},
\]
Lemma~\ref{lem:logder} gives
\begin{multline*}
\left|\Im\left\{\frac{\zeta'}{\zeta}\left(s_{x,\alpha}\right)-\frac{\zeta'}{\zeta}\left(s\right)\right\}\right| \leq \sum_{\gamma} \left(\frac{\left(\sigma_{x,\alpha}-\frac{1}{2}\right)^2}{\left(\sigma_{x,\alpha}-\frac{1}{2}\right)^2+\left(t-\gamma\right)^2}\right. \times \\
\times\left.
\frac{\left|t-\gamma\right|}{\left(\sigma-\frac{1}{2}\right)^2+\left(t-\gamma\right)^2}\right) + \frac{\alpha}{t_0\log{x}}\left(\frac{\pi}{4}+\frac{2}{t_0}\right)
\end{multline*}
for $\sigma\in\left[1/2,\sigma_{x,\alpha}\right]$ since
\[
\left|\left(\sigma-\frac{1}{2}\right)^2-\left(\sigma_{x,\alpha}-\frac{1}{2}\right)^2\right| = \left(\sigma_{x,\alpha}-\frac{1}{2}\right)^2 - \left(\sigma-\frac{1}{2}\right)^2 \leq \left(\sigma_{x,\alpha}-\frac{1}{2}\right)^2.
\]
Therefore, by the former inequality and the definition of $J_3$ we have
\begin{multline*}
\frac{\left|J_3\right|}{\pi} \leq \frac{\sigma_{x,\alpha}-\frac{1}{2}}{\pi}\sum_{\gamma}\left(\frac{\sigma_{x,\alpha}-\frac{1}{2}}{\left(\sigma_{x,\alpha}-\frac{1}{2}\right)^2+\left(t-\gamma\right)^2} \int_{\frac{1}{2}}^{\sigma_{x,\alpha}}\frac{\left|t-\gamma\right|\dif{\sigma}}{\left(\sigma-\frac{1}{2}\right)^2+\left(t-\gamma\right)^2}\right) \\
+ \frac{\alpha^2}{\pi t_0\log^{2}{x}}\left(\frac{\pi}{4}+\frac{2}{t_0}\right).
\end{multline*}
Because
\[
\int_{\frac{1}{2}}^{\sigma_{x,\alpha}}\frac{\left|t-\gamma\right|\dif{\sigma}}{\left(\sigma-\frac{1}{2}\right)^2+\left(t-\gamma\right)^2} \leq \int_{0}^{\infty} \frac{\left|t-\gamma\right|\dif{u}}{u^2+\left(t-\gamma\right)^2} \leq \frac{\pi}{2},
\]
inequality~\eqref{eq:important} guarantees that
\begin{flalign}
\frac{\left|J_3\right|}{\pi} &\leq \frac{\alpha}{2\log{x}}F\left(s_{x,\alpha}\right) + \frac{\alpha^2}{\pi t_0\log^2{x}}\left(\frac{\pi}{4}+\frac{2}{t_0}\right) \nonumber \\
&\leq \frac{\alpha^2 e^{\alpha}}{\alpha e^{\alpha}-1-e^{-\alpha}}\left(\frac{\left|r(x,t,\alpha)\right|}{2\log{x}}+\frac{\log{t}}{4\log{x}}\right) \nonumber \\
&+ \frac{\alpha^2\left(A\left(x_0\right)+3e^{\alpha}\right)}{2\left(\alpha e^{\alpha}-1-e^{-\alpha}\right)}\frac{1}{\log{x}}
+ \frac{\alpha^2}{\pi t_0}\left(\frac{\pi}{4}+\frac{2}{t_0}\right)\frac{1}{\log^2{x}}. \label{eq:J3}
\end{flalign}
Using~\eqref{eq:J2} and~\eqref{eq:J3} in~\eqref{eq:S} gives
\begin{flalign}
\label{eq:SJ1}
\left|S(t)-\frac{J_1}{\pi}\right| &\leq \frac{\alpha^2 e^{\alpha}\left(2+\pi\right)}{2\pi\left(\alpha e^{\alpha}-1-e^{-\alpha}\right)}\frac{|r(x,t,\alpha)|}{\log{x}} + \frac{\alpha^2e^{\alpha}\pi+2\alpha\left(1+e^{-\alpha}\right)}{4\pi\left(\alpha e^{\alpha}-1-e^{-\alpha}\right)}\frac{\log{t}}{\log{x}} \nonumber \\
&+\alpha\left(\frac{\alpha\left(2+\pi\right)A\left(x_0\right)+3\pi\alpha e^{\alpha}+6\left(1+e^{-\alpha}\right)}{2\pi\left(\alpha e^{\alpha}-1-e^{-\alpha}\right)}\right)\frac{1}{\log{x}} \nonumber \\
&+ \frac{\alpha^2}{\pi t_0}\left(\frac{\pi}{4}+\frac{2}{t_0}\right)\frac{1}{\log^2{x}}.
\end{flalign}
Also,
\[
\left|\frac{1}{\pi}J_1+\frac{1}{\pi}\sum_{n\leq x^2}\frac{\Lambda_x(n)\sin\left(t\log{n}\right)}{n^{\sigma_{x,\alpha}}\log{n}}\right| = \frac{1}{\pi}\left|\int_{\sigma_{x,\alpha}}^{\infty}\Im\left\{\frac{\zeta'}{\zeta}\left(s\right)+\sum_{n\leq x^2}\frac{\Lambda_x(n)}{n^{s}}\right\}\dif{\sigma}\right|,
\]
and the right-hand side of the latter equality is not greater than
\begin{equation}
\label{eq:J1sum}
\frac{1}{\pi\left(\alpha e^{\alpha}-1-e^{-\alpha}\right)}\left(\left(1+\frac{1}{2e^{\alpha}}\right)\left(\frac{|r(x,t,\alpha)|}{\log{x}}+\frac{\log{t}}{2\log{x}}\right)+\frac{\alpha A\left(x_0\right)+3\left(1+e^{-\alpha}\right)}{\log{x}}\right)
\end{equation}
by inequality~\eqref{eq:main2} since
\[
\int_{\sigma_{x,\alpha}}^{\infty}x^{\frac{1}{2}-\sigma}\dif{\sigma} = \frac{e^{-\alpha}}{\log{x}}, \quad \int_{\sigma_{x,\alpha}}^{\infty}\left(1+x^{\frac{1}{2}-\sigma}\right)x^{\frac{1}{2}-\sigma}\dif{\sigma} = \frac{\left(2+e^{-\alpha}\right)e^{-\alpha}}{2\log{x}}.
\]
Combining~\eqref{eq:SJ1} and~\eqref{eq:J1sum} gives the bound from the theorem.
\end{proof}

\subsection{Approximation of $S_1(t)$ by segments of Dirichlet series}
\label{sec:S1}

Taking a similar approach as in the proof of Theorem~\ref{thm:selberg}, we can provide a proof of the analogous result for $S_1(t)$.

\begin{theorem}
\label{thm:selberg2}
Assume the Riemann Hypothesis. Let $1\leq\alpha\leq2$, $t\geq t_0\geq10$, and $e^{4/3}\leq x_0\leq x\leq t^2$. With the notations from Lemma~\ref{lem:selberg} we then have
\begin{multline*}
\left|S_1(t)+\frac{1}{\pi}\mathrm{p.v.}\int_{\frac{1}{2}}^{\infty}\log{\left|\zeta(\sigma)\right|}\dif{\sigma}-\frac{1}{\pi}\sum_{n\leq x^2}\frac{\Lambda_x(n)\cos{\left(t\log{n}\right)}}{n^{\sigma_{x,\alpha}}\log^2{n}}\left(1+\frac{\alpha\log{n}}{\log{x}}\right)\right| \\
\leq a_2(\alpha)\frac{\left|r(x,t,\alpha)\right|}{\log^2{x}} + b_2(\alpha)\frac{\log{t}}{\log^2{x}} + \frac{c_2\left(x_0,\alpha\right)}{\log^2{x}} + \frac{d_2\left(t_0,\alpha\right)}{\log^3{x}},
\end{multline*}
where
\begin{gather*}
a_2(\alpha) \de \frac{\alpha^3 e^{\alpha} + 1 + \alpha + \frac{1+2\alpha}{4}e^{-\alpha}}{\pi\left(\alpha e^{\alpha}-1-e^{-\alpha}\right)}, \\
b_2(\alpha)\de \frac{\alpha^3 e^{\alpha} + \left(\alpha^2+\alpha+\frac{1}{2}\right)e^{-\alpha}+\alpha^2+2(1+\alpha)}{4\pi\left(\alpha e^{\alpha}-1-e^{-\alpha}\right)}, \\
c_2\left(x_0,\alpha\right) \de \frac{3\alpha^3e^{\alpha}+2\alpha\left(\alpha^2+\alpha+1\right)A\left(x_0\right)+3\left(\alpha^2+2\alpha+2\right)\left(1+e^{-\alpha}\right)}{2\pi\left(\alpha e^{\alpha}-1-e^{-\alpha}\right)}, \\
d_2\left(t_0,\alpha\right) \de \frac{\alpha^3}{2\pi t_0}\left(\frac{\pi}{4}+\frac{2}{t_0}\right),
\end{gather*}
and $A\left(x_0\right)$ is defined by~\eqref{eq:A}.
\end{theorem}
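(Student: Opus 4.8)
The plan is to imitate the proof of Theorem~\ref{thm:selberg} almost line for line, now starting from the representation~\eqref{eq:intS1}. Write $\pi S_1(t) = -\mathrm{p.v.}\int_{1/2}^{\infty}\log|\zeta(\sigma)|\,\dif\sigma - K$, where $K \de \int_{1/2}^{\infty}(\sigma-1/2)\Re\{\zeta'/\zeta(s)\}\,\dif\sigma$ is an ordinary convergent integral, since $\zeta'/\zeta$ has no singularity on the half-line $\{\Re s\ge 1/2,\ \Im s=t\}$ for $t\ge t_0\ge10$. Put $s_{x,\alpha}=\sigma_{x,\alpha}+\ie t$ and note $\sigma_{x,\alpha}\le2$, since $\alpha\le2$ and $\log x\ge\log x_0\ge4/3$. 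Splitting the $\sigma$-integration at $\sigma_{x,\alpha}$ exactly as $\pi S(t)$ was split into $J_1+J_2+J_3$, I would write $K=K_1+K_2+K_3$ with $K_1\de\int_{\sigma_{x,\alpha}}^{\infty}(\sigma-1/2)\Re\{\zeta'/\zeta(s)\}\,\dif\sigma$, $K_2\de\frac{\alpha^2}{2\log^2 x}\Re\{\zeta'/\zeta(s_{x,\alpha})\}$ (coming from replacing $\zeta'/\zeta(s)$ by $\zeta'/\zeta(s_{x,\alpha})$ on $[1/2,\sigma_{x,\alpha}]$ and using $\int_{1/2}^{\sigma_{x,\alpha}}(\sigma-1/2)\,\dif\sigma=\alpha^2/(2\log^2 x)$), and $K_3\de\int_{1/2}^{\sigma_{x,\alpha}}(\sigma-1/2)\Re\{\zeta'/\zeta(s)-\zeta'/\zeta(s_{x,\alpha})\}\,\dif\sigma$.

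For $K_1$ I would apply Lemma~\ref{lem:selberg} on $[\sigma_{x,\alpha},\infty)$ to replace $\zeta'/\zeta(s)$ by $-\sum_{n\le x^2}\Lambda_x(n)n^{-s}$. The resulting main term contains $\int_{\sigma_{x,\alpha}}^{\infty}(\sigma-1/2)n^{-\sigma}\,\dif\sigma$, which equals $n^{-\sigma_{x,\alpha}}(\log n)^{-2}(1+\alpha\log n/\log x)$ by the substitution $u=\sigma-1/2$ and the identity $\int_{\beta}^{\infty}ue^{-\lambda u}\,\dif u=\lambda^{-2}e^{-\lambda\beta}(1+\lambda\beta)$; thus the main term of $K_1$ is precisely the negative of the Dirichlet sum appearing in the statement, and cancels the term $-\frac1\pi\sum_{n\le x^2}(\cdots)$ on the left-hand side. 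The $K_1$-error $E_1$ is then bounded by integrating the right-hand side of~\eqref{eq:main2} against $(\sigma-1/2)$, for which one needs $\int_{\sigma_{x,\alpha}}^{\infty}(\sigma-1/2)x^{1/2-\sigma}\,\dif\sigma=e^{-\alpha}(1+\alpha)/\log^2 x$ and $\int_{\sigma_{x,\alpha}}^{\infty}(\sigma-1/2)x^{1-2\sigma}\,\dif\sigma=e^{-2\alpha}(1+2\alpha)/(4\log^2 x)$. For $K_2$ I would simply insert the bound~\eqref{eq:J2} for $|\zeta'/\zeta(s_{x,\alpha})|$.

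For $K_3$ I would use the second inequality of Lemma~\ref{lem:logder} with $s_0=s_{x,\alpha}$ (admissible since $\sigma,\sigma_{x,\alpha}\in[1/2,2]$ and $t\ge10$): it expresses $\Re\{\zeta'/\zeta(s)-\zeta'/\zeta(s_{x,\alpha})\}$ as $\sum_\gamma\bigl(\frac{\sigma-1/2}{(\sigma-1/2)^2+(t-\gamma)^2}-\frac{\sigma_{x,\alpha}-1/2}{(\sigma_{x,\alpha}-1/2)^2+(t-\gamma)^2}\bigr)$ up to an error bounded by $\frac{2}{t_0}\bigl(1+\frac{\alpha}{2\log x}(\frac\pi4+\frac1{t_0})\bigr)$ (after using $t\ge t_0$ and $|\sigma-\sigma_{x,\alpha}|\le\alpha/\log x$); integrating this error against $(\sigma-1/2)$ over $[1/2,\sigma_{x,\alpha}]$ produces exactly the $\alpha^2/(\pi t_0\log^2 x)$ contribution to $c_2$ and the $1/\log^3 x$ term $d_2$. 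For the zero-sum I would write it as $\int_{1/2}^{\sigma_{x,\alpha}}(\sigma-1/2)F(s)\,\dif\sigma$ minus $\frac{\alpha^2}{2\log^2 x}F(s_{x,\alpha})$, with $F$ as in~\eqref{eq:F}; the first integral is nonnegative, and by the elementary estimate $\int_0^b\frac{a^2}{a^2+c^2}\,\dif a\le\frac{b^3}{b^2+c^2}$ (a consequence of $\arctan u\ge u/(1+u^2)$) it is at most $(\sigma_{x,\alpha}-1/2)^2F(s_{x,\alpha})=\frac{\alpha^2}{\log^2 x}F(s_{x,\alpha})$, so the zero-sum part of $K_3$ is at most $\frac{\alpha^2}{2\log^2 x}F(s_{x,\alpha})$ in absolute value. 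Finally I would control $F(s_{x,\alpha})$ through inequality~\eqref{eq:important} from the proof of Lemma~\ref{lem:selberg}.

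Assembling the pieces, the left-hand side of the theorem equals $\frac1\pi|E_1+K_2+K_3|\le\frac1\pi(|E_1|+|K_2|+|K_3|)$, and collecting the coefficients of $|r(x,t,\alpha)|/\log^2 x$, $\log t/\log^2 x$, $1/\log^2 x$ and $1/\log^3 x$ reproduces, after simplification, the quantities $a_2(\alpha)$, $b_2(\alpha)$, $c_2(x_0,t_0,\alpha)$ and $d_2(t_0,\alpha)$. I expect the only genuinely delicate point to be the treatment of the zero-sum in $K_3$: one must exploit the nonnegativity of both $\int_{1/2}^{\sigma_{x,\alpha}}(\sigma-1/2)F(s)\,\dif\sigma$ and $F(s_{x,\alpha})$, together with the above elementary integral inequality, in order to reduce everything to $F(s_{x,\alpha})$, which is the single quantity for which an a priori bound --- namely~\eqref{eq:important} --- is available; everything else is the same careful but routine bookkeeping of error terms as in the proof of Theorem~\ref{thm:selberg}.
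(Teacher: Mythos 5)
Your proposal is correct and takes essentially the same route as the paper: your $K_1,K_2,K_3$ are (up to sign) the paper's $J_4,J_5,J_6$, and each piece is handled exactly as in the paper via \eqref{eq:main2}, \eqref{eq:J2}, the error term of Lemma~\ref{lem:logder}, and \eqref{eq:important}, which reproduces the stated $a_2,b_2,c_2,d_2$. The only deviation is in the zero-sum part of $K_3$, where the paper invokes Fujii's bound $\left|K(\gamma)\right|\leq\frac{1}{2}\left(\sigma_{x,\alpha}-\frac{1}{2}\right)^3$ while you obtain the identical estimate $\frac{\alpha^2}{2\log^2{x}}F\left(s_{x,\alpha}\right)$ directly from $\arctan{u}\geq u/(1+u^2)$; this is a harmless (and self-contained) substitute yielding the same final constants.
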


\begin{proof}
Let $s_{x,\alpha}=\sigma_{x,\alpha}+\ie t$. By~\eqref{eq:intS1} we can write
\[
\pi S_1(t)=-\mathrm{p.v.}\int_{\frac{1}{2}}^{\infty}\log{\left|\zeta(\sigma)\right|}\dif{\sigma}+J_4+J_5+J_6,
\]
where
\begin{gather*}
J_4\de -\int_{\sigma_{x,\alpha}}^{\infty}\left(\sigma-\frac{1}{2}\right)\Re\left\{\frac{\zeta'}{\zeta}\left(s\right)\right\}\dif{\sigma}, \quad
J_5\de -\frac{1}{2}\left(\sigma_{x,\alpha}-\frac{1}{2}\right)^2\Re\left\{\frac{\zeta'}{\zeta}\left(s_{x,\alpha}\right)\right\}, \\
J_6 \de \int_{\frac{1}{2}}^{\sigma_{x,\alpha}}\left(\sigma-\frac{1}{2}\right)\Re\left\{\frac{\zeta'}{\zeta}\left(s_{x,\alpha}\right)-\frac{\zeta'}{\zeta}\left(s\right)\right\}\dif{\sigma}.
\end{gather*}
This implies
\begin{equation}
\label{eq:S1}
\left|S_1(t)+\frac{1}{\pi}\mathrm{p.v.}\int_{\frac{1}{2}}^{\infty}\log{\left|\zeta(\sigma)\right|}\dif{\sigma}-\frac{J_4}{\pi}\right| \leq \frac{\alpha^2}{2\pi\log^2{x}}\left|\frac{\zeta'}{\zeta}\left(s_{x,\alpha}\right)\right| + \frac{\left|J_6\right|}{\pi}.
\end{equation}
By Lemma~\ref{lem:logder} we have
\[
\left|J_6 - \int_{\frac{1}{2}}^{\sigma_{x,\alpha}}\left(\sigma-\frac{1}{2}\right)\left(F\left(s_{x,\alpha}\right)-F(s)\right)\dif{\sigma}\right| \leq \frac{\alpha^3}{2t_0\log^3{x}}\left(\frac{\pi}{4}+\frac{2}{t_0}\right),
\]
where $F(s)$ is defined by~\eqref{eq:F}. We also have
\[
\int_{\frac{1}{2}}^{\sigma_{x,\alpha}}\left(\sigma-\frac{1}{2}\right)\left(F\left(s_{x,\alpha}\right)-F(s)\right)\dif{\sigma} = \sum_{\gamma}\frac{K(\gamma)}{\left(\sigma_{x,\alpha}-\frac{1}{2}\right)^2+\left(t-\gamma\right)^2},
\]
where
\[
K(\gamma) \de \int_{\frac{1}{2}}^{\sigma_{x,\alpha}}\frac{\left(\sigma-\frac{1}{2}\right)\left(\sigma_{x,\alpha}-\sigma\right)\left(\left(t-\gamma\right)^2-\left(\sigma-\frac{1}{2}\right)\left(\sigma_{x,\alpha}-\frac{1}{2}\right)\right)}{\left(\sigma-\frac{1}{2}\right)^2+\left(t-\gamma\right)^2}\dif{\sigma}.
\]
Fujii showed that $\left|K(\gamma)\right|\leq (1/2)\left(\sigma_{x,\alpha}-1/2\right)^3$, see the last line of~\cite[p.~144]{FujiiANote}. Observe that calculation of $\left|K(\gamma)\right|$ for $t=\gamma$ confirms that this inequality is sharp. Inequality~\eqref{eq:important} then implies
\begin{flalign}
\label{eq:J6}
\frac{\left|J_6\right|}{\pi} &\leq \frac{\alpha^2}{2\pi\log^2{x}}F\left(s_{x,\alpha}\right) + \frac{\alpha^3}{2\pi t_0\log^3{x}}\left(\frac{\pi}{4}+\frac{2}{t_0}\right) \nonumber \\
&\leq \frac{\alpha^3e^{\alpha}}{\alpha e^{\alpha}-1-e^{-\alpha}}\left(\frac{\left|r(x,t,\alpha)\right|}{2\pi\log^2{x}}+\frac{\log{t}}{4\pi\log^2{x}}\right) \nonumber \\
&+ \frac{\alpha^3\left(A\left(x_0\right)+3e^{\alpha}\right)}{2\pi\left(\alpha e^{\alpha}-1-e^{-\alpha}\right)}\frac{1}{\log^2{x}} + \frac{\alpha^3}{2\pi t_0}\left(\frac{\pi}{4}+\frac{2}{t_0}\right)\frac{1}{\log^3{x}}.
\end{flalign}
Using~\eqref{eq:J2} and~\eqref{eq:J6} in~\eqref{eq:S1} gives
\begin{multline}
\label{eq:S1J4}
\left|S_1(t)+\frac{1}{\pi}\mathrm{p.v.}\int_{\frac{1}{2}}^{\infty}\log{\left|\zeta(\sigma)\right|}\dif{\sigma}-\frac{J_4}{\pi}\right| \leq \frac{\alpha^3e^{\alpha}}{\pi\left(\alpha e^{\alpha}-1-e^{-\alpha}\right)}\frac{\left|r(x,t,\alpha)\right|}{\log^2{x}} \\
+ \frac{\alpha^3 e^{\alpha}+\alpha^2\left(1+e^{-\alpha}\right)}{4\pi\left(\alpha e^{\alpha}-1-e^{-\alpha}\right)}\frac{\log{t}}{\log^2{x}}
+ \frac{2\alpha^3 A\left(x_0\right)+3\alpha^2\left(\alpha e^{\alpha}+1+e^{-\alpha}\right)}{2\pi\left(\alpha e^{\alpha}-1-e^{-\alpha}\right)}\frac{1}{\log^2{x}} \\
+ \frac{\alpha^3}{2\pi t_0}\left(\frac{\pi}{4}+\frac{2}{t_0}\right)\frac{1}{\log^3{x}}.
\end{multline}
Also,
\begin{multline*}
\left|\frac{J_4}{\pi}-\frac{1}{\pi}\sum_{n\leq x^2}\frac{\Lambda_x(n)\cos{\left(t\log{n}\right)}}{n^{\sigma_{x,\alpha}}\log^2{n}}\left(1+\frac{\alpha\log{n}}{\log{x}}\right)\right| \\
=\frac{1}{\pi}\left|\int_{\sigma_{x,\alpha}}^{\infty}\left(\sigma-\frac{1}{2}\right)\Re\left\{\frac{\zeta'}{\zeta}(s)+\sum_{n\leq x^2}\frac{\Lambda_x(n)}{n^s}\right\}\dif{\sigma}\right|,
\end{multline*}
and the right-hand side of the latter equality is not greater than
\begin{equation}
\label{eq:J4sum}
\frac{1+\alpha+\frac{1+2\alpha}{4}e^{-\alpha}}{\pi\left(\alpha e^{\alpha}-1-e^{-\alpha}\right)}\left(\frac{|r(x,t,\alpha)|}{\log^2{x}}+\frac{\log{t}}{2\log^2{x}}\right) + \frac{\left(1+\alpha\right)\left(\alpha A\left(x_0\right)+3\left(1+e^{-\alpha}\right)\right)}{\pi\left(\alpha e^{\alpha}-1-e^{-\alpha}\right)\log^2{x}}
\end{equation}
by inequality~\eqref{eq:main2} since
\begin{gather*}
\int_{\sigma_{x,\alpha}}^{\infty} \left(\sigma-\frac{1}{2}\right)x^{\frac{1}{2}-\sigma}\dif{\sigma} = \frac{(1+\alpha)e^{-\alpha}}{\log^2{x}}, \\ \int_{\sigma_{x,\alpha}}^{\infty} \left(\sigma-\frac{1}{2}\right)\left(1+x^{\frac{1}{2}-\sigma}\right)x^{\frac{1}{2}-\sigma}\dif{\sigma} = \frac{\left(1+\alpha+\frac{1+2\alpha}{4}e^{-\alpha}\right)e^{-\alpha}}{\log^2{x}}.
\end{gather*}
Combining~\eqref{eq:S1J4} and~\eqref{eq:J4sum} gives the bound from the theorem.
\end{proof}

\subsection{Upper bound for $\zeta(1/2+\ie t)$}
\label{sec:zetaCL}

Following Soundararajan's proof of his main proposition in~\cite{SoundMoments}, we can prove the next theorem.

\begin{theorem}
\label{thm:logzeta}
Assume the Riemann Hypothesis. Let $\alpha\geq0.49123$, $t\geq10$, and $2\leq x\leq t^2$. With the notations from Lemma~\ref{lem:selberg} we then have
\begin{multline*}
\log{\left|\zeta\left(\frac{1}{2}+\ie t\right)\right|} \leq \sum_{n\leq x}\frac{\Lambda(n)\log{\left(\frac{x}{n}\right)}\cos\left(t\log{n}\right)}{n^{\sigma_{x,\alpha}}\log{n}\log{x}} + \frac{1+\alpha}{2}\frac{\log{t}}{\log{x}} \\
+ \frac{3\left(1+\alpha\right)}{\log{x}} + \frac{e^{-\alpha}}{\sqrt{x}\log^2{x}}\left(1+\frac{1}{x\left(x^2-1\right)}\right),
\end{multline*}
where $t$ does not coincide with the ordinate of a zero of $\zeta$.
\end{theorem}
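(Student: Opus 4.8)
The plan is to follow Soundararajan's proof of his main proposition (\cite[pp.~985--987]{SoundMoments}): represent $\log|\zeta(1/2+\ie t)|$ as a horizontal integral of $\Re\{\zeta'/\zeta\}$ off the line $\Re s=\sigma_{x,\alpha}$, feed the truncated identity~\eqref{eq:sound} into the tail, and estimate the short piece by hand. Write $s=\sigma+\ie t$. Since $\partial_\sigma\log|\zeta(\sigma+\ie t)|=\Re\{(\zeta'/\zeta)(s)\}$ and $\log|\zeta(\sigma+\ie t)|\to 0$ as $\sigma\to\infty$,
\[
\log\left|\zeta\left(\tfrac12+\ie t\right)\right| = -\int_{\sigma_{x,\alpha}}^{\infty}\Re\left\{\frac{\zeta'}{\zeta}(s)\right\}\dif\sigma - \int_{1/2}^{\sigma_{x,\alpha}}\Re\left\{\frac{\zeta'}{\zeta}(s)\right\}\dif\sigma .
\]
On the short interval (where $\sigma_{x,\alpha}\le 2$ for the range of $x$ one cares about) I would use the first inequality of Lemma~\ref{lem:logder}: under RH, $\Re\{(\zeta'/\zeta)(s)\}=F(s)+E(s)$ with $F(s)\ge 0$ as in~\eqref{eq:F} and $|E(s)|\le\tfrac12\log t+3$. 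This gives
\[
-\int_{1/2}^{\sigma_{x,\alpha}}\Re\left\{\frac{\zeta'}{\zeta}(s)\right\}\dif\sigma \le \frac{\alpha}{\log x}\left(\tfrac12\log t+3\right) - \int_{1/2}^{\sigma_{x,\alpha}}F(\sigma+\ie t)\,\dif\sigma ,
\]
and, crucially, I would \emph{retain} the nonnegative $\int_{1/2}^{\sigma_{x,\alpha}}F(\sigma+\ie t)\dif\sigma$ rather than discard it, since it must later cancel the sum over zeros.

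For the tail I would insert~\eqref{eq:sound} (legitimate because $t$ is not the ordinate of a zero) and integrate term by term in $\sigma$. The Dirichlet polynomial integrates to $\sum_{n\le x}\Lambda(n)\log(x/n)/(n^{s}\log n\log x)$, whose real part is precisely the main term of the theorem; the term $\tfrac1{\log x}(\zeta'/\zeta)'(s)$ integrates to $-\tfrac1{\log x}\Re\{(\zeta'/\zeta)(\sigma_{x,\alpha}+\ie t)\}$, which by Lemma~\ref{lem:logder} is $\le\tfrac1{\log x}(\tfrac12\log t+3)-\tfrac1{\log x}F(\sigma_{x,\alpha}+\ie t)$, and I again retain the $-F(\sigma_{x,\alpha}+\ie t)/\log x$ part; and the pole term together with the sum over trivial zeros are bounded in modulus using $|x^{1-s}|=x^{1-\sigma}$, $|1-s|,|2q+s|\ge t\ge 10$, a geometric series, and $x\le t^2$, which (via $x^{1-\sigma_{x,\alpha}}=\sqrt x\,e^{-\alpha}$ and $x^{-\sigma_{x,\alpha}}=e^{-\alpha}/\sqrt x$) reproduces exactly $e^{-\alpha}/(\sqrt x\log^2 x)$ and $e^{-\alpha}/(x\sqrt x\,(x^2-1)\log^2 x)$. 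Summing the two $\tfrac12\log t$ contributions produces the coefficient $\tfrac{1+\alpha}2$ and the two additive $3$'s produce $3(1+\alpha)$; hence the asserted bound follows \emph{once} one shows that the leftover
\[
\frac1{\log x}\int_{\sigma_{x,\alpha}}^{\infty}\Re\left\{\sum_\rho\frac{x^{\rho-s}}{(s-\rho)^2}\right\}\dif\sigma - \int_{1/2}^{\sigma_{x,\alpha}}F(\sigma+\ie t)\,\dif\sigma - \frac{F(\sigma_{x,\alpha}+\ie t)}{\log x}
\]
is nonpositive.

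Proving that nonpositivity is the crux, and it is what forces $\alpha\ge 0.49123$. Under RH each zero is $\rho=1/2+\ie\gamma$, so $|x^{\rho-s}|=x^{1/2-\sigma}$ and $|s-\rho|^{-2}=((\sigma-1/2)^2+(t-\gamma)^2)^{-1}$; replacing $\Re\{\cdot\}$ by $|\cdot|$, exchanging $\sum_\rho$ with the integral, and evaluating the elementary $\sigma$-integrals $\int_{\sigma_{x,\alpha}}^\infty x^{1/2-\sigma}/((\sigma-\tfrac12)^2+(t-\gamma)^2)\dif\sigma$ and $\int_{1/2}^{\sigma_{x,\alpha}}(\sigma-\tfrac12)/((\sigma-\tfrac12)^2+(t-\gamma)^2)\dif\sigma$ reduces the claim — term by term over the zeros, after the substitution $u=(\sigma-1/2)\log x$, $D=(t-\gamma)\log x$ — to
\[
\int_{\alpha}^{\infty}\frac{e^{-u}}{u^{2}+D^{2}}\,\dif u \le \frac12\log\frac{\alpha^{2}+D^{2}}{D^{2}} + \frac{\alpha}{\alpha^{2}+D^{2}} \qquad\text{for all }D\ge 0.
\]
As $D\to\infty$ the two sides are asymptotic to $e^{-\alpha}/D^{2}$ and $(\alpha+\alpha^{2}/2)/D^{2}$, so the inequality can hold only when $e^{-\alpha}\le\alpha+\alpha^{2}/2$, i.e. $\alpha\ge 0.4912\ldots$; conversely a routine single-variable analysis (the case $D\to\infty$ being the binding one, with ample room for small and moderate $D$) shows that $\alpha\ge 0.49123$ suffices. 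This calculus estimate is the one genuinely delicate ingredient — and the reason for the lower bound on $\alpha$ — while everything preceding it is bookkeeping of explicit constants; the interchanges of sum and integral are justified by $N(u+1)-N(u-1)\ll\log u$.
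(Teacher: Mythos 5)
Your proposal is correct and is essentially the paper's argument (Soundararajan's method): the same split of $-\int_{1/2}^{\infty}\Re\{\zeta'/\zeta\}\,\mathrm{d}\sigma$ at $\sigma_{x,\alpha}$, Lemma~\ref{lem:logder} on the short piece and at $s_{x,\alpha}$, term-by-term integration of \eqref{eq:sound} on the tail, and the same error terms. The only organizational difference is at the end: the paper collapses all zero contributions to a single multiple of $F\left(s_{x,\alpha}\right)$ — it lower-bounds the short-interval integral of $F$ by $\frac{\alpha}{2\log x}F\left(s_{x,\alpha}\right)$ via $\log(1+y)\ge y/(1+y)$ and upper-bounds the tail zero-sum term by $\frac{e^{-\alpha}}{\alpha\log x}F\left(s_{x,\alpha}\right)$, so the leftover is $\frac{F\left(s_{x,\alpha}\right)}{\log x}\left(\frac{e^{-\alpha}}{\alpha}-\frac{\alpha}{2}-1\right)\le 0$ exactly when $e^{-\alpha}\le\alpha+\frac{\alpha^2}{2}$, i.e.\ $\alpha\ge0.49123$ — whereas you reduce to a per-zero inequality in $D=(t-\gamma)\log x$. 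Note that your inequality is not actually delicate and needs no asymptotic case analysis in $D$: since $u\ge\alpha$ on the range of integration, $\int_{\alpha}^{\infty}\frac{e^{-u}}{u^{2}+D^{2}}\,\mathrm{d}u\le\frac{e^{-\alpha}}{\alpha^{2}+D^{2}}$, while $\frac12\log\frac{\alpha^{2}+D^{2}}{D^{2}}\ge\frac{\alpha^{2}}{2\left(\alpha^{2}+D^{2}\right)}$, so your claim holds uniformly in $D$ precisely under $e^{-\alpha}\le\alpha+\frac{\alpha^{2}}{2}$ — the same threshold, obtained by the same two elementary bounds the paper uses.
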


\begin{proof}
Let $s_{x,\alpha}=\sigma_{x,\alpha}+\ie t$. By Lemma~\ref{lem:logder} we have
\begin{equation}
\label{eq:RealPart}
-\Re\left\{\frac{\zeta'}{\zeta}(s)\right\} \leq \frac{1}{2}\log{t} + 3 - F(s),
\end{equation}
where $F(s)$ is defined by~\eqref{eq:F}. Integrating this inequality over $\sigma$ from $1/2$ to $\sigma_{x,\alpha}$ gives
\begin{flalign}
\label{eq:boundCritical}
\log{\left|\zeta\left(\frac{1}{2}+\ie t\right)\right|} &\leq \log{\left|\zeta\left(s_{x,\alpha}\right)\right|} + \frac{\alpha}{\log{x}}\left(\frac{1}{2}\log{t} + 3\right) \nonumber \\
&- \frac{1}{2}\sum_{\gamma}\log{\left(1+\left(\frac{\sigma_{x,\alpha}-\frac{1}{2}}{t-\gamma}\right)^2\right)} \nonumber \\
&\leq \log{\left|\zeta\left(s_{x,\alpha}\right)\right|} + \frac{\alpha\log{t}}{2\log{x}} + \frac{3\alpha}{\log{x}} - \frac{\alpha F\left(s_{x,\alpha}\right)}{2\log{x}},
\end{flalign}
where we also used inequality
\begin{equation}
\label{eq:ineq}
\log{(1+u)} \geq \frac{u}{1+u},
\end{equation}
which is valid for $u\geq0$. Integrating equality~\eqref{eq:sound} over $\sigma$ from $\sigma_{x,\alpha}$ to $\infty$, and then extracting the real parts gives
\begin{equation}
\label{eq:sxalpha}
\log{\left|\zeta\left(s_{x,\alpha}\right)\right|} = \sum_{n\leq x}\frac{\Lambda(n)\log{\frac{x}{n}}\cos\left(t\log{n}\right)}{n^{\sigma_{x,\alpha}}\log{n}\log{x}} - \frac{1}{\log{x}}\Re\left\{\frac{\zeta'}{\zeta}\left(s_{x,\alpha}\right)\right\} + J_7 + J_8 + J_9,
\end{equation}
where
\begin{gather*}
J_7 \de -\frac{1}{\log{x}}\Re\left\{\int_{\sigma_{x,\alpha}}^{\infty}\frac{x^{1-s}}{(1-s)^2}\dif{\sigma}\right\}, \quad J_8 \de \frac{1}{\log{x}}\Re\left\{\int_{\sigma_{x,\alpha}}^{\infty}\sum_{q=1}^{\infty}\frac{x^{-2q-s}}{(2q+s)^2}\dif{\sigma}\right\}, \\
J_9 \de \frac{1}{\log{x}}\Re\left\{\int_{\sigma_{x,\alpha}}^{\infty}\sum_{\rho}\frac{x^{\rho-s}}{\left(\rho-s\right)^2}\dif{\sigma}\right\}.
\end{gather*}
We have
\begin{gather*}
\left|J_7\right| \leq \frac{1}{\log{x}}\int_{\sigma_{x,\alpha}}^{\infty}x^{-\sigma}\dif{\sigma} = \frac{e^{-\alpha}}{\sqrt{x}\log^2{x}}, \\
\left|J_8\right| \leq  \frac{1}{\log{x}}\sum_{q=1}^{\infty}x^{-2q-1}\int_{\sigma_{x,\alpha}}^{\infty}x^{-\sigma}\dif{\sigma} = \frac{e^{-\alpha}}{\left(x^2-1\right)x\sqrt{x}\log^2{x}}, \\
\left|J_9\right| \leq \frac{F\left(s_{x,\alpha}\right)}{\alpha}\int_{\sigma_{x,\alpha}}^{\infty}x^{\frac{1}{2}-\sigma}\dif{\sigma} = \frac{e^{-\alpha}F\left(s_{x,\alpha}\right)}{\alpha\log{x}}
\end{gather*}
since $\left|1-s\right|^2\geq t^2\geq x$, $\left|2q+s\right|^2\geq t^2\geq x$ and
\[
\left|\sum_{\rho}\frac{x^{\rho-s}}{\left(\rho-s\right)^2}\right| \leq \frac{F\left(s_{x,\alpha}\right)\log{x}}{\alpha}x^{\frac{1}{2}-\sigma}.
\]
Taking~\eqref{eq:RealPart} into~\eqref{eq:sxalpha} together with bounds on $J_7$, $J_8$ and $J_9$, and then employing~\eqref{eq:boundCritical}, we obtain the main inequality from the theorem with the additional term
\[
\frac{F\left(s_{x,\alpha}\right)}{\log{x}}\left(\frac{e^{-\alpha}}{\alpha}-\frac{\alpha}{2}-1\right).
\]
But this term is negative for $\alpha\geq 0.49123$, therefore it can be omitted. The proof is thus complete.
\end{proof}

\section{Derivation of general bounds II}
\label{sec:boundsII}

The aim of this section is to obtain explicit upper bounds for $\left|S(t)\right|$, $\left|S_1(t)\right|$, and $\left|\zeta\left(1/2+\ie t\right)\right|$ in terms of elementary functions by using results from Section~\ref{sec:boundsI}. Therefore, we need firstly obtain explicit estimates of sums which involve $\Lambda(n)$ and $\Lambda_x(n)$ from Theorems~\ref{thm:selberg},~\ref{thm:selberg2}, and~\ref{thm:logzeta}, namely
\begin{equation}
\label{eq:rbound}
\left|r(x,t,\alpha)\right| \leq \sum_{n\leq x^2}\frac{\Lambda_x(n)}{\sqrt{n}} \ll \frac{x}{\log{x}},
\end{equation}
\begin{equation}
\label{eq:sinbound}
\left|\sum_{n\leq x^2}\frac{\Lambda_x(n)\sin{\left(t\log{n}\right)}}{n^{\sigma_{x,\alpha}}\log{n}}\right| \leq \sum_{n\leq x^2}\frac{\Lambda_x(n)}{\sqrt{n}\log{n}} \ll \frac{x}{\log^2{x}},
\end{equation}
\begin{flalign}
\left|\sum_{n\leq x^2}\frac{\Lambda_x(n)\cos{\left(t\log{n}\right)}}{n^{\sigma_{x,\alpha}}\log^{2}{n}}\left(1+\frac{\alpha\log{n}}{\log{x}}\right)\right| &\leq \sum_{n\leq x^2}\frac{\Lambda_x(n)}{\sqrt{n}\log^{2}{n}} + \frac{\alpha}{\log{x}}\sum_{n\leq x^2}\frac{\Lambda_x(n)}{\sqrt{n}\log{n}} \nonumber \\
&\ll \frac{x}{\log^3{x}}, \label{eq:cosbound}
\end{flalign}
and
\begin{equation}
\label{eq:soundbound}
\left|\sum_{n\leq x}\frac{\Lambda(n)\log{\left(\frac{x}{n}\right)}\cos\left(t\log{n}\right)}{n^{\sigma_{x,\alpha}}\log{n}\log{x}}\right| \leq \frac{1}{\log{x}}\sum_{n\leq x}\frac{\Lambda(n)\log{\frac{x}{n}}}{\sqrt{n}\log{n}} \ll \frac{\sqrt{x}}{\log^2{x}}.
\end{equation}
We are already able to deduce $C_0=b_1(\alpha)$ and $C_1=b_2(\alpha)$ from these bounds after taking $x=\log{t}$ in Theorems~\ref{thm:selberg} and~\ref{thm:selberg2}, where $C_0$ and $C_1$ are constants from~\eqref{eq:SRH} and~\eqref{eq:S1RH}, respectively. With the choice $\alpha=1$ we obtain Fujii's values $C_0=0.80398\ldots$ and $C_1=0.50902\ldots$, but at $\alpha=1.5582\ldots$ and $\alpha=1.4604\ldots$ the functions $b_1(\alpha)$ and $b_2(\alpha)$ have minimal values $0.54261\ldots$ and $0.33696\ldots$, respectively.

Before proceeding to the proofs of Corollaries~\ref{cor:S},~\ref{cor:S1}, and~\ref{cor:sound}, it is useful to provide upper bounds for some sums over prime numbers, see Lemma~\ref{lem:sumsOverPrimes}. In order to do so, we state in the following two lemmas simple upper bounds for two integrals which arise from the summation formula.

\begin{lemma}
\label{lem:integral}
We have
\[
\int_{2}^{x}\frac{\dif{y}}{\sqrt{y}\log^3{y}} \leq 23.1\frac{\sqrt{x}}{\log^3{x}}
\]
for all $x\geq2$.
\end{lemma}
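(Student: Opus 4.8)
The plan is to estimate $\int_2^x \frac{\dif y}{\sqrt y\,\log^3 y}$ by splitting the range at some intermediate point and bounding the integrand separately on each piece, exactly as one does for the standard logarithmic-integral estimates. First I would observe that the function $y\mapsto \sqrt y/\log^3 y$ is eventually increasing, so its derivative $\frac{1}{2\sqrt y\,\log^3 y} - \frac{3}{\sqrt y\,\log^4 y} = \frac{1}{2\sqrt y\,\log^3 y}\left(1-\tfrac{6}{\log y}\right)$ is positive once $\log y > 6$, i.e. for $y > e^6$. This is the source of the eventual bound: for $x$ large the primitive grows like $2\sqrt x/\log^3 x$ up to lower-order terms, and the constant $23.1$ is just chosen large enough to absorb the contribution of small $y$ together with the lower-order terms.

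Concretely I would fix a threshold, say $x_1 = e^6$ (or whatever value makes the arithmetic cleanest), and argue in two cases. For $2\le x\le x_1$ the quantity $\frac{\sqrt x}{\log^3 x}$ is bounded below by a positive constant on the compact interval while $\int_2^x \frac{\dif y}{\sqrt y\log^3 y}$ is bounded above by $\int_2^{x_1}\frac{\dif y}{\sqrt y\log^3 y}$, a fixed finite number; dividing, one checks $23.1$ comfortably works (note $\log^3 y \ge \log^3 2 > 0$, so the integral converges and is small). For $x > x_1$ I would split $\int_2^x = \int_2^{x_1} + \int_{x_1}^x$. The first piece is the constant handled above, which I absorb by noting $\sqrt x/\log^3 x \ge \sqrt{x_1}/\log^3 x_1$ on this range. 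For the main piece, since $\log y \ge 6$ forces $1-\tfrac6{\log y}\ge \tfrac13$ when $\log y\ge 9$ — or more simply, use $\log^3 y \ge \tfrac12\log^3 y$ trivially and integrate — I would write $\int_{x_1}^x \frac{\dif y}{\sqrt y \log^3 y} \le \frac{1}{\log^3 x_1}\int_{x_1}^x \frac{\dif y}{\sqrt y} \le \frac{2\sqrt x}{\log^3 x_1}$, but this is too crude; the right move is instead to integrate by parts or compare with the derivative of $2\sqrt y/\log^3 y$, which gives $\int_{x_1}^x \frac{\dif y}{\sqrt y\log^3 y} = \left[\frac{2\sqrt y}{\log^3 y}\right]_{x_1}^x + 6\int_{x_1}^x \frac{\dif y}{\sqrt y\log^4 y} \le \frac{2\sqrt x}{\log^3 x} + 6\cdot\frac{1}{\log x_1}\int_{x_1}^x\frac{\dif y}{\sqrt y\log^3 y}$, and since $6/\log x_1 = 1$ when $x_1 = e^6$ this is unhelpful — so I would instead take $x_1 = e^{12}$ so that $6/\log x_1 = 1/2$, yielding $\int_{x_1}^x \frac{\dif y}{\sqrt y\log^3 y}\le \frac{2\sqrt x}{\log^3 x} + \frac12\int_{x_1}^x\frac{\dif y}{\sqrt y\log^3 y}$, hence $\int_{x_1}^x \frac{\dif y}{\sqrt y\log^3 y}\le \frac{4\sqrt x}{\log^3 x}$. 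Adding the small-$y$ contribution and using $\sqrt x/\log^3 x$ monotonicity on $[x_1,\infty)$ to absorb it, one lands at a constant comfortably below $23.1$.

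The only genuinely delicate point — and the step I expect to require the most care — is verifying the inequality on the initial range $2\le x\le x_1$, because $\log^3 x$ changes sign behaviour is irrelevant (it is positive throughout $y>1$) but $\sqrt x/\log^3 x$ is \emph{decreasing} there, so its minimum on $[2,x_1]$ is attained at $x=x_1$ and equals $\sqrt{x_1}/\log^3 x_1$, while the integral is increasing and maximal at $x=x_1$; thus the worst ratio is $\left(\int_2^{x_1}\frac{\dif y}{\sqrt y\log^3 y}\right)\big/\left(\sqrt{x_1}/\log^3 x_1\right)$, which is a single explicit number to bound numerically. With $x_1=e^{12}$ this is a routine (if slightly tedious) numerical check that the ratio stays below $23.1$; one can be generous in estimating $\int_2^{x_1}\frac{\dif y}{\sqrt y\log^3 y}$ from above, e.g. by bounding $\log^3 y \ge \log^3 2$ on $[2,e]$ and $\log^3 y\ge 1$ on $[e,x_1]$, since the constant $23.1$ leaves ample slack. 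Combining the two cases finishes the proof.
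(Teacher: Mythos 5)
Your tail argument is fine: integrating by parts and choosing the split point so that $6/\log x_1=\tfrac12$ (i.e.\ $x_1=e^{12}$) to bootstrap $\int_{x_1}^{x}\frac{\dif{y}}{\sqrt{y}\log^3 y}\le \frac{4\sqrt{x}}{\log^3 x}$ is a clean analytic treatment of large $x$, arguably tidier than the paper's splitting of $\int_2^x$ at $x^{\log 2/\log x_0}$. But the proof breaks down precisely where you declare it "routine", on $2\le x\le x_1$. First, the monotonicity claim is false: $\sqrt{x}/\log^3 x$ is decreasing only up to $x=e^{6}$ and increasing thereafter, so its minimum on $[2,e^{12}]$ is $e^{3}/216\approx 0.093$, not $\sqrt{x_1}/\log^3 x_1=e^{6}/1728\approx 0.233$. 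With the correct minimum your decoupled "worst ratio" becomes $\bigl(\int_2^{e^{12}}\frac{\dif{y}}{\sqrt{y}\log^3 y}\bigr)/(e^3/216)\approx 2.9/0.093\approx 31.5>23.1$, so the max-over-min device cannot close the case. Second, the suggested generous estimate of the initial integral ($\log^3 y\ge 1$ on $[e,x_1]$) gives roughly $2(e^{6}-\sqrt{e})\approx 8\cdot 10^{2}$, off by two orders of magnitude from the true value $\approx 2.9$; and the same crude estimate is what you would need to absorb the $[2,x_1]$ contribution in the large-$x$ case as well, so that case also needs a genuine numerical evaluation of $\int_2^{e^{12}}\frac{\dif{y}}{\sqrt{y}\log^3 y}$ rather than slack.

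The deeper problem is the premise that "23.1 leaves ample slack". It does not: the quantity $\frac{\log^3 x}{\sqrt{x}}\int_2^x\frac{\dif{y}}{\sqrt{y}\log^3 y}$ climbs to about $22.8$ near $x=e^{6}\approx 400$ before decaying towards $2$, so the stated constant is nearly sharp and any proof must track the ratio essentially pointwise on the moderate range. This is exactly how the paper proceeds: it derives, by splitting at $x^{\log 2/\log x_0}$, the bound $\frac{\log^3 x}{\sqrt{x}}\int_2^x\frac{\dif{y}}{\sqrt{y}\log^3 y}\le 2\bigl(\frac{\log x_0}{\log 2}\bigr)^3+\frac{2}{\log^3 2}\,x^{-\frac12\left(1-\frac{\log 2}{\log x_0}\right)}\log^3 x$, verifies the ratio numerically on $[2,10^{26}]$, and applies the displayed bound with $\log x_0=\sqrt[3]{23/2}\,\log 2$ for $x\ge 10^{26}$. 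To repair your write-up you would need to replace the decoupled estimate on $[2,x_1]$ by an actual (fine-grained or rigorous interval-arithmetic) verification of the ratio on that range, or by an argument that follows the ratio's shape through its peak at $x\approx e^{6}$; as written, the middle range is not covered.
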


\begin{proof}
Let $2\leq x_0\leq x$. Then $2\leq x^{\log{2}/\log{x_0}}\leq x$, which implies
\begin{flalign*}
\int_{2}^{x}\frac{\dif{y}}{\sqrt{y}\log^3{y}} &= \left(\int_{2}^{x^{\frac{\log{2}}{\log{x_0}}}}+\int_{x^{\frac{\log{2}}{\log{x_0}}}}^{x}\right)\frac{\dif{y}}{\sqrt{y}\log^3{y}} \\
&\leq \frac{2}{\log^3{2}}\left(x^{\frac{\log{2}}{2\log{x_0}}}-\sqrt{2}\right) + \frac{2}{\left(\frac{\log{2}}{\log{x_0}}\log{x}\right)^3}\left(\sqrt{x}-x^{\frac{\log{2}}{2\log{x_0}}}\right).
\end{flalign*}
This means that
\begin{equation}
\label{eq:boundIntegral}
\frac{\log^3{x}}{\sqrt{x}}\int_{2}^{x}\frac{\dif{y}}{\sqrt{y}\log^3{y}} \leq 2\left(\frac{\log{x_0}}{\log{2}}\right)^3
+ \frac{2}{\log^3{2}}x^{-\frac{1}{2}\left(1-\frac{\log{2}}{\log{x_0}}\right)}\log^3{x}.
\end{equation}
Using \emph{Mathematica}, we expressed the function on the left-hand side of~\eqref{eq:boundIntegral} with the exponential integral function and used \texttt{FindMaximum} to verify that it is not greater than $23.1$ for $x\in\left[2,10^{26}\right]$. Alternatively, we used inequality
\[
\frac{\log^3{x}}{\sqrt{x}}\int_{2}^{x}\frac{\dif{y}}{\sqrt{y}\log^3{y}} \leq \frac{\log^3{x_1}}{\sqrt{x_1}}\int_{2}^{x_2}\frac{\dif{y}}{\sqrt{y}\log^3{y}}
\]
for $x\in\left[x_1,x_2\right]$ and $x_1\geq10^3$ to verify this for $x\in\bigcup_{0\leq n\leq 10}\left[10^{5+2n},10^{7+2n}\right]$, while for $x\in\left[2,10^5\right]$ we split this interval into small intervals and calculated values at endpoints of each interval. Taking $\log{x_0}=\sqrt[3]{23/2}\log{2}$ in~\eqref{eq:boundIntegral}, we can see that this is true also for $x\geq10^{26}$. This proves the lemma.
\end{proof}

\begin{lemma}
\label{lem:integral2}
We have
\[
\int_{2}^{x}\frac{\dif{y}}{\sqrt{y}\log^4{y}} \leq 132.6\frac{\sqrt{x}}{\log^4{x}}
\]
for all $x\geq2$.
\end{lemma}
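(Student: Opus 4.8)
The plan is to mimic the proof of Lemma~\ref{lem:integral} verbatim, only replacing the exponent $3$ by $4$ throughout, and then re-running the numerical verification. First I would pick an auxiliary parameter $x_0$ with $2\leq x_0\leq x$, note that $2\leq x^{\log 2/\log x_0}\leq x$, and split the integral at the point $x^{\log 2/\log x_0}$:
\[
\int_{2}^{x}\frac{\dif{y}}{\sqrt{y}\log^4{y}} = \left(\int_{2}^{x^{\frac{\log 2}{\log x_0}}}+\int_{x^{\frac{\log 2}{\log x_0}}}^{x}\right)\frac{\dif{y}}{\sqrt{y}\log^4{y}}.
\]
On the first piece I bound $\log^4 y\geq\log^4 2$ and integrate $1/\sqrt{y}$, obtaining a contribution at most $\frac{2}{\log^4 2}\bigl(x^{\frac{\log 2}{2\log x_0}}-\sqrt 2\bigr)$. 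On the second piece $\log y\geq\frac{\log 2}{\log x_0}\log x$, so the integrand is at most $\frac{1}{(\frac{\log 2}{\log x_0}\log x)^4}\cdot\frac{1}{\sqrt y}$, which integrates to at most $\frac{2}{(\frac{\log 2}{\log x_0}\log x)^4}\bigl(\sqrt x-x^{\frac{\log 2}{2\log x_0}}\bigr)$.

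Next I would multiply through by $\log^4 x/\sqrt x$ to obtain the analogue of~\eqref{eq:boundIntegral}, namely
\[
\frac{\log^4 x}{\sqrt x}\int_{2}^{x}\frac{\dif{y}}{\sqrt{y}\log^4{y}} \leq 2\left(\frac{\log x_0}{\log 2}\right)^4 + \frac{2}{\log^4 2}\,x^{-\frac12\left(1-\frac{\log 2}{\log x_0}\right)}\log^4 x,
\]
where I have dropped the negative terms $-\sqrt 2$ and $-x^{\frac{\log 2}{2\log x_0}}$ (the latter appearing with the $+$ coefficient, so it must be checked that discarding it still gives an upper bound — it does, since it enters with a minus sign after distributing). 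The first term on the right is a constant in $x$; the second tends to $0$ as $x\to\infty$ because of the negative power of $x$ (valid precisely because $x_0>2$ forces $1-\frac{\log 2}{\log x_0}>0$). So for $x$ beyond some explicit threshold the right side is dominated by $2(\log x_0/\log 2)^4$, and a good choice is to take $x_0$ so that this equals the target constant, i.e.\ $\log x_0 = (132.6/2)^{1/4}\log 2 = 66.3^{1/4}\log 2$.

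For the remaining bounded range of $x$ I would verify numerically, exactly as in Lemma~\ref{lem:integral}, that the left-hand function $x\mapsto \frac{\log^4 x}{\sqrt x}\int_2^x \frac{\dif y}{\sqrt y\log^4 y}$ stays below $132.6$ on $[2,x_1]$ for a suitably chosen cutoff $x_1$ (the value $132.6$ is presumably the maximum of this function over all $x\geq2$, attained somewhere in the low-to-moderate range), and then use the displayed inequality with the chosen $x_0$ to cover $x\geq x_1$. I do not expect any genuine obstacle here: the only subtlety is the bookkeeping in the sign of the dropped term and the matching of the numerical constant $132.6$ with the choice of $x_0$ and the crossover point $x_1$, all of which is routine once one has Lemma~\ref{lem:integral} as a template.
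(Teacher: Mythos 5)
Your proposal is essentially the paper's own proof: the paper splits the integral at $x^{\log 2/\log x_0}$, derives exactly your displayed inequality (its \eqref{eq:boundIntegral2}), checks numerically that the left-hand side stays below $132.6$ for $x\in[2,10^{26}]$, and then handles $x\geq 10^{26}$ by a fixed choice of $x_0$. The one calibration point to fix is your choice $\log x_0=(132.6/2)^{1/4}\log 2$, which makes the constant term exactly $132.6$ and leaves no slack for the strictly positive decaying term; you must instead take $x_0$ slightly smaller, as the paper does with $\log x_0=(132/2)^{1/4}\log 2$, so that the remaining margin absorbs the second term beyond the numerical cutoff.
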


\begin{proof}
Following the same approach as in the proof of Lemma~\ref{lem:integral}, we obtain
\begin{equation}
\label{eq:boundIntegral2}
\frac{\log^4{x}}{\sqrt{x}}\int_{2}^{x}\frac{\dif{y}}{\sqrt{y}\log^4{y}} \leq 2\left(\frac{\log{x_0}}{\log{2}}\right)^4
+ \frac{2}{\log^4{2}}x^{-\frac{1}{2}\left(1-\frac{\log{2}}{\log{x_0}}\right)}\log^4{x}
\end{equation}
for $2\leq x_0\leq x$. Similarly as before we can show that the function on the left-hand side of~\eqref{eq:boundIntegral2} is not greater than $132.6$ for $x\in\left[2,10^{26}\right]$. Taking $\log{x_0}=\sqrt[4]{132/2}\log{2}$ in~\eqref{eq:boundIntegral2}, we can see that this is true also for $x\geq 10^{26}$, which concludes the proof.
\end{proof}

\begin{lemma}
\label{lem:sumsOverPrimes}
Assume the Riemann Hypothesis. Let $X\geq2$. Then
\[
\sum_{p\leq X} \frac{\log{p}}{\sqrt{p}} \leq 2\sqrt{X} + \frac{\log^3{X}}{48\pi} + \frac{\log^2{X}}{8\pi} - 1.41,
\]
\[
\sum_{p\leq X}\frac{\log{p}\log{\frac{X}{p}}}{\sqrt{p}} \leq 4\sqrt{X} + \frac{\log^4{X}}{192\pi} +\frac{\log^3{X}}{24\pi} - 1.416\log{X} - 4.679,
\]
\[
\sum_{p\leq X}\frac{1}{\sqrt{p}} \leq \frac{2\sqrt{X}}{\log{X}} + \frac{4\sqrt{X}}{\log^2{X}} + \frac{184.8\sqrt{X}}{\log^3{X}} + \frac{\log^2{X}}{32\pi} + \frac{\log{X}}{4\pi} - 13.84,
\]
\[
\sum_{p\leq X}\frac{1}{\sqrt{p}\log{p}} \leq \frac{2\sqrt{X}}{\log^2{X}} + \frac{92.4\sqrt{X}}{\log^3{X}} + \frac{\log{X}}{16\pi} + \frac{\log{\log{X}}}{4\pi} - 2.888,
\]
\[
\sum_{p\leq X}\frac{\log{\frac{X}{p}}}{\sqrt{p}} \leq \frac{4\sqrt{X}}{\log{X}} + \frac{184.8\sqrt{X}}{\log^2{X}} + \frac{\log^3{X}}{96\pi} + \frac{\log^2{X}}{8\pi} - 13.84\log{X} + 1.417,
\]
and
\begin{flalign*}
\sum_{p\leq X}\frac{\log{\frac{X}{p}}}{\sqrt{p}\log{p}} \leq \frac{4\sqrt{X}}{\log^2{X}} + \frac{3166.4\sqrt{X}}{\log^3{X}} &+ \frac{\log^2{X}}{32\pi} \\
&+ \frac{\log{X}\log{\log{X}}}{4\pi} - 36.94\log{X} + 81.8.
\end{flalign*}
\end{lemma}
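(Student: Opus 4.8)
The plan is to bring every sum under the umbrella of the Chebyshev function $\theta(y)\de\sum_{p\le y}\log{p}$ by partial summation, and then to feed in Schoenfeld's conditional estimate
\[
\left|\theta(y)-y\right| < \frac{\sqrt{y}\,\log^{2}{y}}{8\pi},
\]
which holds under the Riemann Hypothesis for $y\ge 599$. Writing $E(y)\de\theta(y)-y$ and, for a summand $g(p)$, setting $h(y)\de g(y)/\log{y}$ so that $\sum_{p\le X}g(p)=\int h(y)\,\dif{\theta(y)}$, partial summation yields the single identity
\[
\sum_{p\le X}g(p)=2h(2)+\int_{2}^{X}h(y)\dif{y}+h(X)E(X)-\int_{2}^{X}h'(y)E(y)\dif{y}
\]
valid for $X\ge 2$, in which the first two terms form the main term and the last two the error. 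I would apply it in turn with $h(y)$ equal to $\frac{1}{\sqrt{y}}$, $\frac{1}{\sqrt{y}\log{y}}$, $\frac{1}{\sqrt{y}\log^{2}{y}}$, and each of these multiplied by $\log(X/y)$, which covers all six sums.

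For the main term $\int_{2}^{X}h(y)\dif{y}$ with a pure power of $\log{y}$ in the denominator I would integrate by parts repeatedly, peeling off $\frac{2\sqrt{X}}{\log{X}}$, $\frac{4\sqrt{X}}{\log^{2}{X}}$, and so on, until the remaining integral is $\int_{2}^{X}\frac{\dif{y}}{\sqrt{y}\log^{3}{y}}$ or $\int_{2}^{X}\frac{\dif{y}}{\sqrt{y}\log^{4}{y}}$, to which Lemmas~\ref{lem:integral} and~\ref{lem:integral2} apply; this is precisely where the constants $184.8=8\cdot 23.1$, $92.4=4\cdot 23.1$, $3166.4$, etc.\ enter. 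A factor $\log(X/y)=\log{X}-\log{y}$ is handled by linearity, with $\log{X}$ acting as a constant multiplier of the same iterated integrals. For the error, $\left|h(X)E(X)\right|$ contributes $\tfrac{1}{8\pi}$ times a power of $\log{X}$, while
\[
\left|\int_{2}^{X}h'(y)E(y)\dif{y}\right| \le \frac{1}{8\pi}\int_{2}^{X}\left|h'(y)\right|\sqrt{y}\,\log^{2}{y}\dif{y},
\]
and in each of the six cases the last integral collapses to elementary ones of the form $\int_{2}^{X}\frac{\log^{k}{y}}{y}\dif{y}=\frac{\log^{k+1}{X}-\log^{k+1}{2}}{k+1}$ and $\int_{2}^{X}\frac{\dif{y}}{y\log{y}}=\log{\log{X}}-\log{\log{2}}$, giving the contributions $\frac{\log^{3}{X}}{48\pi}$, $\frac{\log^{4}{X}}{192\pi}$, $\frac{\log^{2}{X}}{32\pi}$, $\frac{\log{X}\log{\log{X}}}{4\pi}$ and their analogues.

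What remains is to control the constants. The integrals above run over $[2,X]$, whereas Schoenfeld's bound is available only from $y=599$ on; on $[2,599]$ I would use the crude estimate $\left|E(y)\right|\le y$, so that the corresponding pieces of $\int\left|h'(y)\right|\sqrt{y}\log^{2}{y}\dif{y}$ (and of $\tfrac12\int_{2}^{599}y^{-3/2}E(y)\dif{y}$ in the first sum) are harmless absolute constants, to be evaluated once and for all. I would then verify the six claimed inequalities directly on a finite initial range $2\le X\le X_{1}$ — each left-hand side is a step function with jumps only at the primes, so finitely many comparisons suffice — choosing $X_{1}$ large enough that the analytic estimate is decisive for $X\ge X_{1}$. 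Rounding each leftover constant in the safe direction then produces the six stated bounds. I expect the only genuine difficulty to be bookkeeping: carrying all the lower-order and constant terms through six slightly different weights, in particular the two with both a $\log(X/y)$ factor and an inverse power of $\log{y}$, where iterated integration by parts and the crude small-$y$ estimates conspire to give the large constants $1.417$, $81.8$ and $3166.4$.
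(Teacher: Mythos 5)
Your proposal is correct and follows essentially the same route as the paper: partial summation against $\vartheta(y)$ (the Rosser--Schoenfeld identity, Eq.~4.14 there), Schoenfeld's conditional bound $\vartheta(y)-y\le\frac{1}{8\pi}\sqrt{y}\log^{2}{y}$, iterated integration by parts reducing the main terms to Lemmas~\ref{lem:integral} and~\ref{lem:integral2}, and elementary $\log$-integrals for the error terms. The only difference is in handling small arguments: since each weight $h(y)=f(y)/\log{y}$ occurring here is decreasing, only the one-sided upper bound on $\vartheta(y)-y$ is needed, and because $\vartheta(y)<y$ in the small range this bound holds for all $y\ge2$, so the paper avoids your split at $y=599$ with the crude estimate and the finite initial-range verification.
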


\begin{proof}
We are using
\begin{equation}
\label{eq:SummationFormula}
\sum_{p\leq X} f(p) = \int_{2}^{X} \frac{f(y)}{\log{y}}\dif{y} + \frac{2f(2)}{\log{2}} + \frac{f(X)\left(\vartheta(X)-X\right)}{\log{X}}-\int_{2}^{X} \left(\vartheta(y)-y\right)\left(\frac{f(y)}{\log{y}}\right)'\dif{y}
\end{equation}
for a differentiable function $f(y)$, see~\cite[Equation 4.14]{RosserSchoenfeld}, together with
\[
\vartheta(X)-X\leq\frac{1}{8\pi}\sqrt{X}\log^2{X},
\]
which is valid under the Riemann Hypothesis for all $X\geq 2$, see~\cite[Equation 6.5]{SchoenfeldSharperRH}. As usual, $\vartheta(X)=\sum_{p\leq X}\log{p}$ is one of Chebyshev's functions. In our application of \eqref{eq:SummationFormula} we have
\[
\frac{f(y)}{\log{y}} = \frac{\log^{\mu_1}{\left(X/y\right)}}{\sqrt{y}\log^{\mu_2}{y}}
\]
for $\mu_1\in\{0,1\}$ and $\mu_2\in\{0,1,2\}$ since pairs $\left(\mu_1,\mu_2\right)=(0,0)$, $\left(\mu_1,\mu_2\right)=(1,0)$, $\left(\mu_1,\mu_2\right)=(0,1)$, $\left(\mu_1,\mu_2\right)=(0,2)$, $\left(\mu_1,\mu_2\right)=(1,1)$, and $\left(\mu_1,\mu_2\right)=(1,2)$ correspond in the same order to the sums from Lemma \ref{lem:sumsOverPrimes}. It is not hard to see that
\begin{multline*}
\sum_{p\leq X} f(p) \leq \int_{2}^{X}\frac{\dif{y}}{\sqrt{y}\log^{\mu_2}{y}} + \frac{1}{8\pi}\log^{2-\mu_2}{X} \\
+\frac{\mu_2}{8\pi}\int_{2}^{X}\frac{\log^{1-\mu_2}{y}}{y}\dif{y}
+\frac{1}{16\pi}\int_{2}^{X}\frac{\log^{2-\mu_2}{y}}{y}\dif{y}
+ \frac{\sqrt{2}}{\log^{\mu_2}{2}}
\end{multline*}
if $\mu_1=0$, while for $\mu_1=1$ we have
\begin{flalign*}
\sum_{p\leq X} f(p) &\leq \log{X}\int_{2}^{X}\frac{\dif{y}}{\sqrt{y}\log^{\mu_2}{y}} - \int_{2}^{X}\frac{\dif{y}}{\sqrt{y}\log^{\mu_2-1}{y}} + \frac{\mu_2\log{X}}{8\pi}\int_{2}^{X}\frac{\log^{1-\mu_2}{y}}{y}\dif{y} \\
&+\frac{1}{8\pi}\left(\frac{\log{X}}{2}+1-\mu_2\right)\int_{2}^{X}\frac{\log^{2-\mu_2}{y}}{y}\dif{y}-\frac{1}{16\pi}\int_{2}^{X}\frac{\log^{3-\mu_2}{y}}{y}\dif{y} \\
&+ \frac{\sqrt{2}}{\log^{\mu_2}{2}}\log{X} - \frac{\sqrt{2}}{\log^{\mu_2-1}{2}}.
\end{flalign*}
It remains to show how to estimate the above integrals. Integration by parts implies
\[
\int_{2}^{X}\frac{\log^{n}{y}}{y}\dif{y} = \frac{\log^{n+1}{X}-\log^{n+1}{2}}{n+1}
\]
for $n\geq0$, and also
\begin{gather}
\int_{2}^{X}\frac{\dif{y}}{\sqrt{y}\log{y}} = \frac{2\sqrt{X}}{\log{X}} + \frac{4\sqrt{X}}{\log^2{X}} - \frac{2\sqrt{2}}{\log{2}} - \frac{4\sqrt{2}}{\log^2{2}} + 8\int_{2}^{X}\frac{\dif{y}}{\sqrt{y}\log^3{y}}, \label{eq:integral1} \\
\int_{2}^{X}\frac{\dif{y}}{\sqrt{y}\log^{2}{y}} = \frac{2\sqrt{X}}{\log^2{X}} - \frac{2\sqrt{2}}{\log^2{2}} + 4\int_{2}^{X}\frac{\dif{y}}{\sqrt{y}\log^3{y}}, \label{eq:integral2} \\
\int_{2}^{X}\frac{\dif{y}}{\sqrt{y}\log^{3}{y}} = \frac{2\sqrt{X}}{\log^3{X}} - \frac{2\sqrt{2}}{\log^3{2}} + 6\int_{2}^{X}\frac{\dif{y}}{\sqrt{y}\log^4{y}}. \label{eq:integral3}
\end{gather}
The stated upper bounds from Lemma \ref{lem:sumsOverPrimes} now follow by tedious but straightforward calculations. We used Lemma~\ref{lem:integral} in the estimation of \eqref{eq:integral1} and \eqref{eq:integral2}, while we used Lemma~\ref{lem:integral2} for \eqref{eq:integral3} which is then used only in the last inequality of the lemma.
\end{proof}

Note that the above proof also shows that the inequalities from Lemma~\ref{lem:sumsOverPrimes} are asymptotically correct, regardless of any hypothesis.

We are ready to prove the next corollaries to theorems from Section~\ref{sec:boundsI}.

\begin{corollary}
\label{cor:S}
Assume the Riemann Hypothesis. Let $1\leq\alpha\leq2$, $t\geq t_0\geq10$, and $e^{4/3}\leq x_0\leq x\leq t^2$. Then we have
\[
\left|S(t)\right| \leq b_1(\alpha)\frac{\log{t}}{\log{x}} + \mathcal{E}_1\left(x_0,t_0,\alpha;x\right)\frac{x}{\log^2{x}},
\]
where
\begin{flalign*}
\mathcal{E}_1\left(x_0,t_0,\alpha;x\right) \de 4a_1(\alpha)+\frac{2}{\pi} &+ \frac{46.2}{\pi\log{x}} + \left(2a_1(\alpha)+\frac{2}{\pi}\right)\frac{\log{x}}{\sqrt{x}} \\
&+ \frac{4}{\pi}\left(1+\frac{46.2}{\log{x}}\right)\frac{1}{\sqrt{x}} + \frac{1}{48\pi}\left(5a_1(\alpha)+\frac{11}{2\pi}\right)\frac{\log^4{x}}{x} \\
&+ \frac{1}{24\pi}\left(11a_1(\alpha)+\frac{18}{\pi}\right)\frac{\log^3{x}}{x} + \frac{\log^2{x}\log{\log{x}}}{2\pi x} \\
&+ \left(a_1(\alpha) - 12.949\right)\frac{\log^2{x}}{x} \\
&+ \left(\frac{1.417}{\pi}-1.762 a_1(\alpha)+c_1\left(x_0,\alpha\right)\right)\frac{\log{x}}{x} \\
&+ \left(\frac{1}{2\pi}-4.679a_1(\alpha)+d_1\left(t_0,\alpha\right)\right)\frac{1}{x},
\end{flalign*}
and $a_1(\alpha)$, $b_1(\alpha)$, $c_1\left(x_0,\alpha\right)$, and $d_1\left(t_0,\alpha\right)$ are functions from Theorem~\ref{thm:selberg}.
\end{corollary}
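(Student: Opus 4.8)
The proof is a bookkeeping exercise resting on Theorem~\ref{thm:selberg}. The triangle inequality applied to the conclusion of that theorem gives
\[
\left|S(t)\right| \leq \frac{1}{\pi}\left|\sum_{n\leq x^2}\frac{\Lambda_x(n)\sin(t\log n)}{n^{\sigma_{x,\alpha}}\log n}\right| + a_1(\alpha)\frac{\left|r(x,t,\alpha)\right|}{\log x} + b_1(\alpha)\frac{\log t}{\log x} + \frac{c_1\left(x_0,t_0,\alpha\right)}{\log x} + \frac{d_1\left(t_0,\alpha\right)}{\log^2{x}},
\]
so the task is to bound every term except $b_1(\alpha)(\log t)/\log x$ by $\mathcal{E}_1\left(x_0,t_0,\alpha;x\right)\frac{x}{\log^2{x}}$. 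Since $n^{\sigma_{x,\alpha}}\geq\sqrt n$ and $\left|\sin(t\log n)\right|\leq1$, the estimates~\eqref{eq:rbound} and~\eqref{eq:sinbound} replace the first two terms by the arithmetic sums $\sum_{n\leq x^2}\Lambda_x(n)n^{-1/2}$ and $\sum_{n\leq x^2}\Lambda_x(n)n^{-1/2}(\log n)^{-1}$, while after multiplication by $\log^2{x}/x$ the terms $c_1/\log x$ and $d_1/\log^2{x}$ turn into the summands $c_1(\log x)/x$ and $d_1/x$ of $\mathcal{E}_1$.

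The decisive step is to split each arithmetic sum into its contribution from primes $p\leq x^2$ and its contribution from higher prime powers $p^k$ with $k\geq2$. In the prime part one uses the definition of $\Lambda_x$: the range $p\leq x$ yields $\sum_{p\leq x}(\log p)p^{-1/2}$, respectively $\sum_{p\leq x}p^{-1/2}$, and the range $x<p\leq x^2$, after writing $\Lambda_x(p)/\log p=\log(x^2/p)/\log x$ and then enlarging the summation range to all $p\leq x^2$ (which only increases the bound), yields $(\log x)^{-1}\sum_{p\leq x^2}\log p\log(x^2/p)\,p^{-1/2}$, respectively $(\log x)^{-1}\sum_{p\leq x^2}\log(x^2/p)\,p^{-1/2}$. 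These are exactly the first, second, third and fifth sums of Lemma~\ref{lem:sumsOverPrimes}, applied once with $X=x$ and once with $X=x^2$; the factor $\sqrt{x^2}=x$ then produces the main terms of size $x/\log x$ and $x/\log^2{x}$ foretold by~\eqref{eq:rbound} and~\eqref{eq:sinbound}. In the prime-power part one merely uses $\Lambda_x(p^k)\leq\Lambda(p^k)=\log p$: the terms with $k\geq3$ sum to an absolute constant (essentially $\sum_p\log p\,(p^{3/2}-p)^{-1}$), while the terms with $k=2$ reduce to $\sum_{p\leq x}(\log p)/p\leq\log x$ for $\sum_{n\leq x^2}\Lambda_x(n)n^{-1/2}$ and to $\tfrac12\sum_{p\leq x}1/p$ for the $\log n$-weighted sum; the latter, controlled by an explicit form of Mertens' theorem, accounts for the term $\log^2{x}\log\log{x}/(2\pi x)$ appearing in $\mathcal{E}_1$.

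What is then left is purely mechanical: insert the resulting elementary upper bounds for the two arithmetic sums (these rest in turn on Lemmas~\ref{lem:integral} and~\ref{lem:integral2}, which already feed Lemma~\ref{lem:sumsOverPrimes}), divide the bound for $\sum_{n\leq x^2}\Lambda_x(n)n^{-1/2}$ by $\log x$, multiply the whole inequality through by $\log^2{x}/x$, and read off the coefficient of each power of $\log x$. I expect the only real obstacle to be arithmetical diligence rather than any new idea: one must carry the constants $23.1$ and $132.6$ of Lemmas~\ref{lem:integral}--\ref{lem:integral2} through the substitution $X=x^2$ (producing $46.2$, $184.8$, and combinations such as $5/(48\pi)=\tfrac1{48\pi}+\tfrac1{12\pi}$ and $11/(24\pi)=\tfrac1{8\pi}+\tfrac1{3\pi}$), keep the several negative constants $-1.41$, $-4.679$, $-13.84$ in play, and check that the outcome collapses to precisely the stated $\mathcal{E}_1\left(x_0,t_0,\alpha;x\right)$; nothing beyond Theorem~\ref{thm:selberg} and Lemma~\ref{lem:sumsOverPrimes} enters.
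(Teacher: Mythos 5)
Your proposal is correct and follows essentially the same route as the paper: the paper's proof likewise feeds Theorem~\ref{thm:selberg} through the triangle inequality, bounds the two Dirichlet-polynomial sums by splitting off primes $p\leq x$, primes $x<p\leq x^2$ (with the range enlarged to $p\leq x^2$), squares of primes, and higher prime powers — exactly your inequalities — and then invokes the first, second, third and fifth bounds of Lemma~\ref{lem:sumsOverPrimes} with $X=x$ and $X=x^2$, plus Mertens-type bounds for $\sum_{p\leq x}(\log p)/p$, $\sum_{p\leq x}1/p$ and the $k\geq3$ tails, before the final bookkeeping that yields $\mathcal{E}_1$.
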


\begin{proof}
By~\eqref{eq:rbound},~\eqref{eq:sinbound}, and Theorem~\ref{thm:selberg} we have
\begin{multline}
\label{eq:ineqS}
\left|S(t)\right| \leq \frac{1}{\pi}\sum_{n\leq x^2}\frac{\Lambda_x(n)}{\sqrt{n}\log{n}} + \frac{a_1(\alpha)}{\log{x}}\sum_{n\leq x^2}\frac{\Lambda_x(n)}{\sqrt{n}} \\
+ b_1(\alpha)\frac{\log{t}}{\log{x}} + \frac{c_1\left(x_0,\alpha\right)}{\log{x}} + \frac{d_1(t_0,\alpha)}{\log^2{x}}.
\end{multline}
We are using
\begin{equation}
\label{eq:sumprimes2}
 \sum_{n\leq x^2}\frac{\Lambda_x(n)}{\sqrt{n}\log{n}} \leq \sum_{p\leq x}\frac{1}{\sqrt{p}} + \frac{1}{\log{x}}\sum_{p\leq x^2}\frac{\log{\frac{x^2}{p}}}{\sqrt{p}} + \frac{1}{2}\sum_{p\leq x}\frac{1}{p} + \sum_{r=3}^{\infty}\sum_{p} \frac{1}{rp^{r/2}}
\end{equation}
and
\begin{equation}
\label{eq:sumprimes1}
\sum_{n\leq x^2}\frac{\Lambda_x(n)}{\sqrt{n}} \leq \sum_{p\leq x}\frac{\log{p}}{\sqrt{p}} + \frac{1}{\log{x}}\sum_{p\leq x^2}\frac{\log{p}\log{\frac{x^2}{p}}}{\sqrt{p}} + \sum_{p\leq x}\frac{\log{p}}{p} + \sum_{r=3}^{\infty}\sum_{p}\frac{\log{p}}{p^{r/2}}
\end{equation}
for estimation of the first and the second sum in~\eqref{eq:ineqS}. By \eqref{eq:SummationFormula} and the unconditional bound $\left|\vartheta(X)-X\right|\leq 3X/\left(2\log{X}\right)$ for $X\geq 2$, see \cite[Theorem 4]{RosserSchoenfeld}, we have
\begin{flalign*}
\sum_{r=3}^{\infty}\sum_{p}\frac{\log{p}}{p^{r/2}} &= \left(\sum_{p\leq X_1}+\sum_{p>X_1}\right)\frac{\log{p}}{p\left(\sqrt{p}-1\right)} \\
&\leq \sum_{p\leq X_1}\frac{\log{p}}{p\left(\sqrt{p}-1\right)} + \frac{2}{\sqrt{X_1}-1}\left(1+\frac{5}{\log{X_1}}\right) \leq 2.48
\end{flalign*}
for $X_1=1.3\cdot10^{6}$. Similarly, we also have
\begin{equation}
\label{eq:doublesum}
\sum_{r=3}^{\infty}\sum_{p}\frac{1}{rp^{r/2}} \leq \sum_{p\leq X_2}\frac{1/3}{p\left(\sqrt{p}-1\right)} + \frac{2/3}{\left(\sqrt{X_2}-1\right)\log{X_2}}\left(1+\frac{7}{\log{X_2}}\right) \leq \frac{2.12}{3}
\end{equation}
for $X_2=2\cdot10^3$. By \cite[Equations 2.10, 3.20 and 3.24]{RosserSchoenfeld} it is known that
\begin{gather}
\sum_{p\leq x}\frac{\log{p}}{p} \leq \log{x}, \nonumber \\
\sum_{p\leq x}\frac{1}{p} \leq \log{\log{x}} + 0.262 + \frac{1}{\log^2{x}}. \label{eq:oneoverp}
\end{gather}
Lemma~\ref{lem:sumsOverPrimes} now easily implies the final bound.
\end{proof}

Although there exist sharper unconditional estimates for the last sums in the above proof, there is no need to include them here.

\begin{corollary}
\label{cor:S1}
Assume the Riemann Hypothesis. Let $1\leq\alpha\leq2$, $t\geq t_0\geq10$, and $e^{4/3}\leq x_0\leq x\leq t^2$. Then we have
\[
\left|S_1(t)\right| \leq b_2(\alpha)\frac{\log{t}}{\log^{2}{x}} + \mathcal{E}_2\left(x_0,t_0,\alpha;x\right)\frac{x}{\log^{3}{x}},
\]
where
\begin{flalign*}
\mathcal{E}_2\left(x_0,t_0,\alpha;x\right) \de 4a_2(\alpha)&+\frac{1+2\alpha}{\pi} + \frac{395.8+46.2\alpha}{\pi\log{x}} + 2\left(a_2(\alpha)+\frac{1+\alpha}{\pi}\right)\frac{\log{x}}{\sqrt{x}} \\
&+ \frac{92.4+4\alpha}{\pi\sqrt{x}} + \frac{184.8\alpha}{\pi\sqrt{x}\log{x}} \\
&+\left(\frac{1}{16\pi^2}\left(3+\frac{11\alpha}{6}\right)+\frac{5a_2(\alpha)}{48\pi}\right)\frac{\log^{4}{x}}{x} \\
&+ \frac{3\log^3{x}\log{\log{x}}}{4\pi^2 x} +\left(\frac{3\alpha}{4\pi^2} + \frac{11a_2(\alpha)}{24\pi} - 23.34\right)\frac{\log^3{x}}{x} \\
&+ \frac{\alpha\log^{2}{x}\log{\log{x}}}{2\pi x} + \left(\frac{81.8-40.68\alpha}{\pi}+a_2(\alpha)\right)\frac{\log^{2}{x}}{x} \\
&+ \left(\frac{1.417\alpha}{\pi}-1.762a_2(\alpha) + c_2\left(x_0,\alpha\right)\right)\frac{\log{x}}{x} \\
&+ \left(\frac{\alpha}{2\pi}-4.679a_2(\alpha)+d_2\left(t_0,\alpha\right)\right)\frac{1}{x},
\end{flalign*}
and $a_2(\alpha)$, $b_2(\alpha)$, $c_2\left(x_0,\alpha\right)$, and $d_2\left(t_0,\alpha\right)$ are functions from Theorem~\ref{thm:selberg2}.
\end{corollary}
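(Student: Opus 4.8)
The plan is to follow the proof of Corollary~\ref{cor:S} almost line for line, with Theorem~\ref{thm:selberg2} in place of Theorem~\ref{thm:selberg} and~\eqref{eq:cosbound} in place of~\eqref{eq:sinbound}. Starting from Theorem~\ref{thm:selberg2} and the triangle inequality, and using $\left|r(x,t,\alpha)\right|\leq\sum_{n\leq x^2}\Lambda_x(n)/\sqrt n$ from~\eqref{eq:rbound} together with the majorant of the cosine sum in~\eqref{eq:cosbound}, one is left with a bound of the shape
\[
\left|S_1(t)\right|\leq b_2(\alpha)\frac{\log t}{\log^2 x}+\frac{1}{\pi}\left|\mathrm{p.v.}\int_{\frac12}^\infty\log\left|\zeta(\sigma)\right|\dif{\sigma}\right|+(\text{error}),
\]
where $(\text{error})$ is a linear combination, with coefficients assembled from $a_2(\alpha)$, $c_2(x_0,t_0,\alpha)$, $d_2(t_0,\alpha)$, $1/\pi$ and $\alpha$, of $\log^{-2}x$ and $\log^{-3}x$ times the three sums $\sum_{n\leq x^2}\Lambda_x(n)/\sqrt n$, $\sum_{n\leq x^2}\Lambda_x(n)/(\sqrt n\log n)$ and $\sum_{n\leq x^2}\Lambda_x(n)/(\sqrt n\log^2 n)$. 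The only feature not already present in Corollary~\ref{cor:S} is the constant $\tfrac1\pi\mathrm{p.v.}\int_{1/2}^\infty\log|\zeta(\sigma)|\dif{\sigma}$: it is independent of $t$ and finite (on $(1/2,\infty)$ the integrand has its only singularity at the simple pole $\sigma=1$, near which it behaves like $-\log|\sigma-1|$, whose principal value integral converges), hence a fixed computable number, and after evaluating it one absorbs it into the coefficient of $\log^3 x/x$ inside $\mathcal{E}_2$.

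Next I would reduce the three $\Lambda_x$-sums to sums over primes exactly as in~\eqref{eq:sumprimes1} and~\eqref{eq:sumprimes2}: in each sum the terms $n=p\leq x$ contribute $\log p/\sqrt p$, $1/\sqrt p$ and $1/(\sqrt p\log p)$ respectively; the terms $x<n=p\leq x^2$ contribute the same with an extra factor $\log(x^2/p)/\log x$, which I bound by extending the range to all $p\leq x^2$ (legitimate since $\log(x^2/p)\geq0$ there); and the prime-power terms $n=p^r$ with $r\geq2$ are smaller and are controlled by the unconditional bounds $\sum_{p\leq x}\log p/p\leq\log x$, $\sum_{p\leq x}1/p\leq\log\log x+0.262+\log^{-2}x$ from~\eqref{eq:oneoverp}, the convergent tails $\sum_{r\geq3}\sum_p\log p/p^{r/2}\leq2.48$ and $\sum_{r\geq3}\sum_p1/(rp^{r/2})\leq2.12/3$ from~\eqref{eq:doublesum}, plus the trivially convergent sums $\sum_p1/(p\log p)$ and $\sum_{r\geq3}\sum_p1/(r^2p^{r/2}\log p)$ needed for the $\Lambda_x(n)/(\sqrt n\log^2 n)$ sum. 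As in the remark after Corollary~\ref{cor:S}, sharper estimates for these tails exist but are unnecessary.

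This leaves the six sums $\sum_{p\leq X}\log p/\sqrt p$, $\sum_{p\leq X}\log p\log(X/p)/\sqrt p$, $\sum_{p\leq X}1/\sqrt p$, $\sum_{p\leq X}1/(\sqrt p\log p)$, $\sum_{p\leq X}\log(X/p)/\sqrt p$ and $\sum_{p\leq X}\log(X/p)/(\sqrt p\log p)$ at $X=x$ and $X=x^2$ — precisely those estimated in Lemma~\ref{lem:sumsOverPrimes}. Substituting them, using $\sqrt{x^2}=x$, $\log x^2=2\log x$, $\log\log x^2=\log 2+\log\log x$ and $x\leq t^2$, the dominant $x/\log x$ contribution comes from $\sum_{p\leq x^2}\log p\log(x^2/p)/\sqrt p\leq 4x+\cdots$ (entering through $a_2(\alpha)|r|/\log^2 x$) and from $\sum_{p\leq x^2}\log(x^2/p)/\sqrt p\leq 2x/\log x+\cdots$ and $\sum_{p\leq x^2}\log(x^2/p)/(\sqrt p\log p)\leq x/\log^2 x+\cdots$ (entering through the two pieces of the cosine sum), giving the leading coefficient $4a_2(\alpha)+(1+2\alpha)/\pi$ of $\mathcal{E}_2$; the $b_2(\alpha)\log t/\log^2 x$ term is untouched; and the remaining contributions — the powers $\log^k x/x$ for $0\leq k\leq4$, the $\log^2 x\log\log x/x$ and $\log^3 x\log\log x/x$ terms arising from the $\log\log X$ pieces of the $1/p$, $1/(\sqrt p\log p)$ and $\log(X/p)/(\sqrt p\log p)$ estimates, and the $\log x/\sqrt x$ and $1/\sqrt x$ terms — combine into the displayed $\mathcal{E}_2(x_0,t_0,\alpha;x)$.

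The hard part is purely the bookkeeping: every lower-order term of the six estimates in Lemma~\ref{lem:sumsOverPrimes}, taken at the two scales $X=x$ and $X=x^2$, must be carried through the $\log x$ and $\log^2 x$ denominators attached to the two Dirichlet sums in~\eqref{eq:cosbound} and then matched against $\mathcal{E}_2$ coefficient by coefficient. In the course of this I would also need to confirm that the only surviving $t$-dependence is the explicit $\log t/\log^2 x$ — which holds because $|r(x,t,\alpha)|$, the cosine sum and the principal-value constant are all dominated by $t$-independent quantities — that every coefficient is finite on $1\leq\alpha\leq2$ (guaranteed by $\alpha e^\alpha-1-e^{-\alpha}>0$ there, as noted in the proof of Lemma~\ref{lem:selberg}), and that the numerical value of $\tfrac1\pi\mathrm{p.v.}\int_{1/2}^\infty\log|\zeta(\sigma)|\dif{\sigma}$ is small enough to be subsumed by one of the $\log^k x/x$ coefficients of $\mathcal{E}_2$ for $x\geq e^{4/3}$.
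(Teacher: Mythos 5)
Your proposal follows essentially the same route as the paper: the paper likewise combines Theorem~\ref{thm:selberg2} with \eqref{eq:rbound} and \eqref{eq:cosbound}, bounds the principal-value integral by the constant $0.82$ (absorbed into $\mathcal{E}_2$ exactly as you describe), reduces the three $\Lambda_x$-sums to prime sums via \eqref{eq:sumprimes1}, \eqref{eq:sumprimes2} and the analogous decomposition with $\tfrac14\sum_p 1/(p\log p)$ and $\sum_{r\geq3}\sum_p 1/(r^2p^{r/2}\log p)$, and then applies Lemma~\ref{lem:sumsOverPrimes} at $X=x$ and $X=x^2$. The remaining work in both treatments is the same coefficient-by-coefficient bookkeeping, so your plan is correct.
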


\begin{proof}
By~\eqref{eq:rbound},~\eqref{eq:cosbound}, and Theorem~\ref{thm:selberg2} we have
\begin{flalign*}
\left|S_1(t)\right| \leq \frac{1}{\pi}\sum_{n\leq x^2}\frac{\Lambda_x(n)}{\sqrt{n}\log^{2}{n}} &+ \frac{\alpha}{\pi\log{x}}\sum_{n\leq x^2}\frac{\Lambda_x(n)}{\sqrt{n}\log{n}} + \frac{a_2(\alpha)}{\log^{2}{x}}\sum_{n\leq x^2}\frac{\Lambda_x(n)}{\sqrt{n}} \\
&+ b_2(\alpha)\frac{\log{t}}{\log^{2}{x}} + 0.82 + \frac{c_2\left(x_0,\alpha\right)}{\log^{2}{x}} + \frac{d_2\left(t_0,\alpha\right)}{\log^{3}{x}}.
\end{flalign*}
We are using inequalities~\eqref{eq:sumprimes2} and~\eqref{eq:sumprimes1} for the estimation of the second and the third sums. The first sum is bounded as
\begin{flalign*}
\sum_{n\leq x^2}\frac{\Lambda_x(n)}{\sqrt{n}\log^{2}{n}} \leq \sum_{p\leq x}\frac{1}{\sqrt{p}\log{p}} &+ \frac{1}{\log{x}}\sum_{p\leq x^2}\frac{\log{\frac{x^2}{p}}}{\sqrt{p}\log{p}} \\
&+ \frac{1}{4}\sum_{p}\frac{1}{p\log{p}} + \sum_{r=3}^{\infty}\sum_{p}\frac{1}{r^2 p^{r/2}\log{p}}.
\end{flalign*}
We also have
\[
\sum_{p}\frac{1}{p\log{p}} \leq 1.64, \quad \sum_{r=3}^{\infty}\sum_{p}\frac{1}{r^2 p^{r/2}\log{p}} \leq \frac{2.12}{9\log{2}},
\]
where the first inequality follows from~\cite[p.~6]{CohenComp} and the second inequality from~\eqref{eq:doublesum}. By Lemma~\ref{lem:sumsOverPrimes} the final bound easily follows.
\end{proof}

\begin{corollary}
\label{cor:sound}
Assume the Riemann Hypothesis. Let $\alpha\geq0.49123$, $t\geq10$, and $2\leq x\leq t^2$. Then we have
\[
\left|\zeta\left(\frac{1}{2}+\ie t\right)\right| \leq \exp{\left(\frac{1+\alpha}{2}\frac{\log{t}}{\log{x}} + \mathcal{E}_3\left(\alpha;x\right)\frac{\sqrt{x}}{\log^2{x}}\right)},
\]
where
\begin{flalign*}
\mathcal{E}_3\left(\alpha;x\right) \de 4 &+ \frac{184.8}{\log{x}} + \frac{\log^4{x}}{96\pi\sqrt{x}} + \frac{\log^3{x}}{8\pi\sqrt{x}} + \frac{\log^2{x}\log{\log{x}}}{2\sqrt{x}} - \frac{13.3\log^2{x}}{\sqrt{x}} \\
&+ \frac{\left(4.417+3\alpha\right)\log{x}}{\sqrt{x}}
+ \left(2+\frac{e^{-\alpha}}{\sqrt{x}}\left(1+\frac{1}{x\left(x^2-1\right)}\right)\right)\frac{1}{\sqrt{x}}.
\end{flalign*}
\end{corollary}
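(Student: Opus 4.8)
The plan is to combine Theorem~\ref{thm:logzeta} with the trivial estimate~\eqref{eq:soundbound} and then reduce the resulting Dirichlet polynomial over prime powers to sums over primes controlled by Lemma~\ref{lem:sumsOverPrimes}. If $t$ is the ordinate of a zero of $\zeta$ then $\left|\zeta\left(1/2+\ie t\right)\right|=0$ and the asserted inequality is trivial, so we may assume Theorem~\ref{thm:logzeta} applies. It then remains to bound
\[
\Sigma \de \sum_{n\leq x}\frac{\Lambda(n)\log{\left(\frac{x}{n}\right)}\cos\left(t\log{n}\right)}{n^{\sigma_{x,\alpha}}\log{n}\log{x}},
\]
and~\eqref{eq:soundbound} (which rests on $n^{\sigma_{x,\alpha}}\geq\sqrt{n}$ and $\left|\cos\right|\leq1$) gives $\Sigma\leq\frac{1}{\log{x}}\sum_{n\leq x}\frac{\Lambda(n)\log{(x/n)}}{\sqrt{n}\log{n}}$. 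Writing $n=p^k$, so that $\Lambda(n)/\log{n}=1/k$, I would split this last sum according to $k=1$, $k=2$, and $k\geq3$:
\[
\sum_{n\leq x}\frac{\Lambda(n)\log{\left(\frac{x}{n}\right)}}{\sqrt{n}\log{n}} = \sum_{p\leq x}\frac{\log{\left(\frac{x}{p}\right)}}{\sqrt{p}} + \frac{1}{2}\sum_{p\leq\sqrt{x}}\frac{\log{\left(\frac{x}{p^2}\right)}}{p} + \sum_{k\geq3}\frac{1}{k}\sum_{p^k\leq x}\frac{\log{\left(\frac{x}{p^k}\right)}}{p^{k/2}}.
\]

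For the prime sum I would invoke the fifth inequality of Lemma~\ref{lem:sumsOverPrimes} with $X=x$, which supplies the principal terms $4\sqrt{x}/\log{x}$ and $184.8\sqrt{x}/\log^2{x}$ together with the $\log^3{x}$, $\log^2{x}$, $\log{x}$ and constant contributions. For the higher prime powers the crude bound $\log{(x/p^k)}\leq\log{x}$ suffices, giving at most $\log{x}\sum_{k\geq3}\frac{1}{k}\sum_p p^{-k/2}\leq\frac{2.12}{3}\log{x}$ by~\eqref{eq:doublesum}. The prime-square sum is the one place that needs more care, because $\sum_{p\leq\sqrt{x}}1/p$ diverges and cannot be absorbed as above: here I would expand $\log{(x/p^2)}=\log{x}-2\log{p}$, discard the nonnegative quantity $\sum_{p\leq\sqrt{x}}(\log{p})/p$, and estimate $\sum_{p\leq\sqrt{x}}1/p$ by~\eqref{eq:oneoverp} with $X=\sqrt{x}$, using $\log{\log{\sqrt{x}}}=\log{\log{x}}-\log{2}$ and $1/\log^2{\sqrt{x}}=4/\log^2{x}$. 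This is precisely where the dominant secondary term $\tfrac12\log{x}\log{\log{x}}$ of $\mathcal{E}_3$ originates.

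Finally I would add the three pieces, divide by $\log{x}$, and multiply through by $\log^2{x}/\sqrt{x}$, then compare with the two remaining contributions $3(1+\alpha)/\log{x}$ and $e^{-\alpha}\bigl(1+\tfrac{1}{x(x^2-1)}\bigr)/(\sqrt{x}\log^2{x})$ from Theorem~\ref{thm:logzeta}. Following the powers of $\log{x}$: the $\sqrt{x}/\log{x}$ and $\sqrt{x}/\log^2{x}$ terms give the leading $4$ and $184.8/\log{x}$; the $\log^3{x}/(96\pi)$ and $\log^2{x}/(8\pi)$ parts of Lemma~\ref{lem:sumsOverPrimes} give $\log^4{x}/(96\pi\sqrt{x})$ and $\log^3{x}/(8\pi\sqrt{x})$; the combined coefficient of $\log{x}$, namely $-13.84-\tfrac12\left(\log{2}-0.262\right)+\tfrac{2.12}{3}=-13.349\ldots$, is rounded up to $-13.3$; the constant $1.417$ from Lemma~\ref{lem:sumsOverPrimes} merges with $3(1+\alpha)$ into the coefficient $4.417+3\alpha$ of $\log{x}/\sqrt{x}$; and the leftover constants (the $2/\log{x}$ from the prime-square term together with the $e^{-\alpha}$ contribution) collapse into the final bracket, identifying the right-hand side with $\mathcal{E}_3(\alpha;x)\sqrt{x}/\log^2{x}$ and proving the corollary after exponentiation. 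The whole computation is essentially bookkeeping; the only genuinely delicate point is retaining, rather than crudely bounding, the prime-square sum, and one should also check that for $2\leq x<4$, where $\sqrt{x}<2$ and every sum over $p\leq\sqrt{x}$ is empty, all steps remain valid, and that the numerical roundings are performed in the direction that enlarges $\mathcal{E}_3$.
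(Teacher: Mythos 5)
Your proposal is correct and follows essentially the same route as the paper: bound the Dirichlet polynomial via \eqref{eq:soundbound}, split $n=p^k$ into $k=1$, $k=2$, $k\geq3$, apply the fifth inequality of Lemma~\ref{lem:sumsOverPrimes} with $X=x$, \eqref{eq:oneoverp} with $X=\sqrt{x}$, and \eqref{eq:doublesum}, then collect terms against $\mathcal{E}_3$; your bookkeeping (e.g.\ $-13.84-\tfrac12(\log 2-0.262)+\tfrac{2.12}{3}\approx-13.35\leq-13.3$ and $1.417+3(1+\alpha)=4.417+3\alpha$) matches the stated constants.
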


\begin{proof}
We may suppose that $t$ does not coincide with the ordinate of a nontrivial zero. By inequality~\eqref{eq:soundbound} and Theorem~\ref{thm:logzeta} we have
\begin{multline*}
\log{\left|\zeta\left(\frac{1}{2}+\ie t\right)\right|} \leq \frac{1}{\log{x}}\sum_{p\leq x}\frac{\log{\frac{x}{p}}}{\sqrt{p}} + \frac{1}{2}\sum_{p\leq\sqrt{x}}\frac{1}{p} + \sum_{r=3}^{\infty}\sum_{p} \frac{1}{rp^{r/2}} \\
+ \frac{1+\alpha}{2}\frac{\log{t}}{\log{x}} + \frac{3\left(1+\alpha\right)}{\log{x}} + \frac{e^{-\alpha}}{\sqrt{x}\log^2{x}}\left(1+\frac{1}{x\left(x^2-1\right)}\right).
\end{multline*}
By Lemma~\ref{lem:sumsOverPrimes}, together with inequalities~\eqref{eq:oneoverp} and~\eqref{eq:doublesum}, the main bound now easily follows.
\end{proof}

\section{Proof of Theorem~\ref{thm:main} and Corollary~\ref{cor:gaps}}
\label{sec:proof}

In this final section we prove Theorem~\ref{thm:main} and Corollary~\ref{cor:gaps} by using results from the previous section. All numerical computations have been made with \emph{Mathematica}.

\begin{proof}[Proof of Theorem \ref{thm:main}]
Let $t\geq45$. We are going to set $x_0=x=\log^{\lambda_i}{t}$ in the corollaries from the previous section for some specific positive real numbers $\lambda_i$ where $i\in\{1,2,3\}$. Corollaries~\ref{cor:S} and~\ref{cor:S1} then imply
\begin{gather}
\left|S(t)\right| \leq \mathcal{M}\left(\frac{b_1\left(\alpha_1\right)}{\lambda_1},\frac{\mathcal{E}_1\left(\log^{\lambda_1}{t},t,\alpha_1;\log^{\lambda_1}{t}\right)}{\lambda_1^2},1-\lambda_1;t\right)\frac{\log{t}}{\log{\log{t}}},
\label{eq:Sfinal} \\
\left|S_1(t)\right| \leq \mathcal{M}\left(\frac{b_2\left(\alpha_2\right)}{\lambda_2^2},\frac{\mathcal{E}_2\left(\log^{\lambda_2}{t},t,\alpha_2;\log^{\lambda_2}{t}\right)}{\lambda_2^3},1-\lambda_2;t\right)\frac{\log{t}}{\left(\log{\log{t}}\right)^2}, \label{eq:S1final}
\end{gather}
for $\alpha_1,\alpha_2\in[1,2]$ and $\lambda_1,\lambda_2\in\left[4/\left(3\log{\log{t}}\right),1\right)$, while Corollary~\ref{cor:sound} asserts
\begin{equation}
\label{eq:CLfinal}
\left|\zeta\left(\frac{1}{2}+\ie t\right)\right| \leq \exp{\left(\mathcal{M}\left(\frac{1+\alpha_3}{2\lambda_3},\frac{\mathcal{E}_3\left(\alpha_3;\log^{\lambda_3}{t}\right)}{\lambda_3^2},1-\frac{\lambda_3}{2};t\right)
\frac{\log{t}}{\log{\log{t}}}\right)}
\end{equation}
for $\alpha_3\geq0.49123$ and $\lambda_3\in\left[\log{2}/\log{\log{t}},2\right)$, where $\mathcal{M}$ is defined by \eqref{eq:M}.

Let $t=10^y$. We are comparing ~\eqref{eq:Sfinal} with~\eqref{eq:boundS},~\eqref{eq:S1final} with~\eqref{eq:boundS1}, and~\eqref{eq:CLfinal} with~\eqref{eq:hiary} in a way that we numerically solve appropriate equations in $y$ for admissible parameters $\alpha_i$ and $\lambda_i$, and then search for a minimal $y$. After rounding all values, we obtain $\alpha_1=1.5281$, $\lambda_1=0.715$ and $y=2465$; $\alpha_2=1.3045$, $\lambda_2=0.7295$ and $y=207.7$; $\alpha_3=0.49123$, $\lambda_3=1.4947$ and $y=39.2$. In the following three paragraphs we are going to explain how to use these values to obtain estimates from Theorem~\ref{thm:main}.

Let $t\geq 10^{2465}$, $\alpha_1=1.5281$ and $\lambda_1=0.715$. Because $c_1\left(u,\alpha_1\right)$ and $d_1\left(u,\alpha_1\right)$ are decreasing functions in the variable $u$, we can write
\[
\mathcal{E}_1\left(\log^{\lambda_1}{t},t,\alpha_1;x\right) \leq \widehat{\mathcal{E}}_1(x) \de \mathcal{E}_1\left(\left(\log{10^{2465}}\right)^{0.715},10^{2465},1.5281;x\right).
\]
By the definition of $\mathcal{E}_1$ we also have
\begin{flalign*}
\widehat{\mathcal{E}}_1(x) &= \frac{46.2}{\pi\log{x}} + \left(2a_1(1.5281)+\frac{2}{\pi}\right)\frac{\log{x}}{\sqrt{x}} - \left(12.949-a_1(1.5281)\right)\frac{\log^2{x}}{x} \\
&- \left(4.679a_1(1.5281)-\frac{1}{2\pi}-d_1\left(10^{2465},1.5281\right)\right)\frac{1}{x} + \mathcal{E}_{11}(x),
\end{flalign*}
where $\mathcal{E}_{11}(x)$ consists of other (positive) terms from $\mathcal{E}_1$. By simple analysis with derivatives we can see that $\mathcal{E}_{11}(x)$ is decreasing for $x\geq e^4$, while the function which consists of the first four terms on the right-hand side of the above equality is decreasing for $x\geq u\geq e^6$ if
\[
\frac{14.7059\sqrt{u}}{\log^3{u}} + 3.7809\left(\frac{1}{2}-\frac{1}{\log{u}}\right) > \left(11.377+\frac{7.2}{\log^2{u}}\right)\frac{\log{u}}{\sqrt{u}}.
\]
Because the latter inequality is valid for $u=10^3$, we deduce that $\widehat{\mathcal{E}}_1(x)$ decreases for $x\geq 10^3$. We verify by computer that $\widehat{\mathcal{E}}_1(x)$ also decreases for $e\leq x\leq 10^3$. Therefore,
\[
\frac{1}{\lambda_1^2}\mathcal{E}_1\left(\log^{\lambda_1}{t},t,\alpha_1;\log^{\lambda_1}{t}\right) \leq \frac{1}{\lambda_1^2}\widehat{\mathcal{E}}_1\left(\left(\log{10^{2465}}\right)^{0.715}\right) \leq 20.1911.
\]
The bound on $\left|S(t)\right|$ from Theorem~\ref{thm:main} now easily follows.

Let $t\geq 10^{208}$, $\alpha_2=1.3045$ and $\lambda_2=0.7295$. Our approach here is similar to the previous paragraph. Because $c_2\left(u,\alpha_2\right)$ and $d_2\left(u,\alpha_2\right)$ are decreasing functions in the variable $u$, we can write
\[
\mathcal{E}_2\left(\log^{\lambda_2}{t},t,\alpha_2;x\right) \leq \widehat{\mathcal{E}}_2(x) \de \mathcal{E}_2\left(\left(\log{10^{208}}\right)^{0.7295},10^{208},1.3045;x\right),
\]
where
\begin{flalign*}
\widehat{\mathcal{E}}_2(x) &= \frac{456.0679}{\pi\log{x}} + 2\left(a_2(1.3045)+\frac{2.3045}{\pi}\right)\frac{\log{x}}{\sqrt{x}} \\
&-\left(23.34-\frac{3.9135}{4\pi^2}-\frac{11a_2(1.3045)}{24\pi}\right)\frac{\log^3{x}}{x} \\
&-\left(4.679a_2(1.3045)-\frac{1.3045}{2\pi}-d_2\left(10^{208},1.3045\right)\right)\frac{1}{x} + \mathcal{E}_{21}(x).
\end{flalign*}
We can see that $\mathcal{E}_{21}(x)$ decreases for $x\geq e^4$, while the function which consists of the first four terms on the right-hand side of the above equality decreases for $x\geq u\geq e^6$ if
\[
\frac{145.17\sqrt{u}}{\log^3{u}} + 3.398\left(\frac{1}{2}-\frac{1}{\log{u}}\right) > \left(23.1+\frac{4.3114}{\log^3{u}}\right)\frac{\log^2{u}}{\sqrt{u}}.
\]
Because the latter inequality is valid for $u=10^4$, we deduce that $\widehat{\mathcal{E}}_2(x)$ decreases for $x\geq 10^4$. We verify by computer that $\widehat{\mathcal{E}}_2(x)$ also decreases for $e\leq x\leq 10^4$. Therefore,
\[
\frac{1}{\lambda_2^3}\mathcal{E}_2\left(\log^{\lambda_2}{t},t,\alpha_2;\log^{\lambda_2}{t}\right) \leq \frac{1}{\lambda_2^3}\widehat{\mathcal{E}}_2\left(\left(\log{10^{208}}\right)^{0.7295}\right) \leq 60.12.
\]
The bound on $\left|S_1(t)\right|$ from Theorem~\ref{thm:main} now easily follows.

Let $t\geq 10^{40}$, $\alpha_3=0.49123$ and $\lambda_3=1.4947$. Derivative analysis shows that
\[
\frac{184.8}{\log{x}} + \frac{\log^4{x}}{96\pi\sqrt{x}} - \frac{13.3\log^2{x}}{\sqrt{x}}
\]
is a decreasing function for $x\geq u\geq e^{10}$ if
\[
\frac{184.8\sqrt{u}}{\log^5{u}} + \frac{\log{u}}{192\pi} > \frac{1}{24\pi} + \frac{13.3}{2\log{u}}.
\]
Because the latter inequality is valid for $u=3.1\cdot10^6$, by the definition of $\widehat{\mathcal{E}_3}(x)\de\mathcal{E}_3\left(\alpha_3;x\right)$ we have $\widehat{\mathcal{E}_3}(x)\leq \widehat{\mathcal{E}_3}\left(3.1\cdot10^6\right)\leq 15.1$ for $x\geq3.1\cdot10^6$. However, as numerical calculations on the interval $\left[2,3.1\cdot10^6\right]$ reveal, $\widehat{\mathcal{E}_3}(x)$ is not always decreasing but has exactly two stationary points, local minimum $\left(1436.420\ldots,14.087\ldots\right)$ and local maximum $\left(181946.074\ldots,15.618\ldots\right)$. Therefore,
\[
\frac{1}{\lambda_3^2}\mathcal{E}_3\left(\alpha_3;\log^{\lambda_3}{t}\right) \leq \frac{1}{\lambda_3^2}\max\left\{\widehat{\mathcal{E}_3}\left(\left(\log{10^{40}}\right)^{1.4947}\right),15.62\right\} \leq 6.992.
\]
The bound on $\left|\zeta\left(1/2+\ie t\right)\right|$ from Theorem~\ref{thm:main} now easily follows. The proof of Theorem~\ref{thm:main} is thus complete.
\end{proof}

\begin{lemma}
\label{lem:gaps}
Assume that $|S(T)|\leq c_0\log{T}/\log{\log{T}}$ for some positive constant $c_0$ and $T\geq T_0>2\pi$, where also $T_0\log{\left(T_0/(2\pi)\right)}\log{\log{T_0}}>2\pi c_0$. Then there exists a nontrivial zero $\rho=\beta+\ie\gamma$ with
\[
\gamma\in\left[T,T+\frac{\widehat{c}}{\log{\log{T}}}\right]
\]
for
\begin{equation}
\label{eq:C1}
\widehat{c} > 4\pi c_0\left(1+\frac{\log{\log{T_0}}}{150c_0T_0\log{T_0}}\right)\left(1-\frac{\log{(2\pi)}}{\log{T_0}}-\frac{2\pi c_0}{T_0\log{T_0}\log{\log{T_0}}}\right)^{-1}.
\end{equation}
\end{lemma}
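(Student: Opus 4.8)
The plan is to locate such a zero via the Riemann--von Mangoldt formula \eqref{eq:RvM}. Set $h\de\widehat{c}/\log\log T$. Since $N(T+h)-N(T)$ is exactly the number of zeros $\rho=\beta+\ie\gamma$ with $T<\gamma\leq T+h$, it suffices to show $N(T+h)-N(T)>0$, which then produces a zero with ordinate in $(T,T+h]\subseteq\bigl[T,\,T+\widehat{c}/\log\log T\bigr]$. If $T$ or $T+h$ happens to be the ordinate of a zero there is nothing to prove, so I would assume otherwise and apply \eqref{eq:RvM} at both points without fuss over the mean-value convention for $S$.

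Writing $\Phi(u)\de\frac{u}{2\pi}\log\frac{u}{2\pi e}$, formula \eqref{eq:RvM} gives $N(T+h)-N(T)=\bigl(\Phi(T+h)-\Phi(T)\bigr)+\bigl(S(T+h)-S(T)\bigr)+\bigl(R(T+h)-R(T)\bigr)$, and I would bound the three differences in turn. Since $\Phi'(u)=\frac{1}{2\pi}\log\frac{u}{2\pi}$ is positive and increasing on $[T_0,\infty)$, the mean value theorem gives $\Phi(T+h)-\Phi(T)\geq\frac{h}{2\pi}\log\frac{T}{2\pi}$. For the $S$-term, applying the hypothesis at $T$ and at $T+h$ and using $\log\log(T+h)\geq\log\log T$ together with $\log(T+h)\leq\log T+h/T$ yields
\[
\bigl|S(T+h)-S(T)\bigr|\leq|S(T+h)|+|S(T)|\leq\frac{2c_0\log T}{\log\log T}+\frac{c_0\,h}{T\log\log T}.
\]
For the $R$-term I would use the explicit Stirling expansion behind \eqref{eq:RvM}, namely $R(u)=\tfrac{1}{48\pi u}+O\!\left(u^{-3}\right)$ with $48\pi>150$, which gives the standard estimate $|R(u)|<\tfrac{1}{150u}$ for $u\geq2\pi$ and hence $\bigl|R(T+h)-R(T)\bigr|<\tfrac{2}{150T}$.

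Collecting these, $N(T+h)-N(T)>0$ is implied by
\[
\frac{h}{2\pi}\log\frac{T}{2\pi}>\frac{2c_0\log T}{\log\log T}+\frac{c_0\,h}{T\log\log T}+\frac{2}{150T}.
\]
Now I would substitute $h=\widehat{c}/\log\log T$, multiply through by $2\pi\log\log T$, move the two terms containing $\widehat{c}$ to the left, and divide by $\log T\bigl(1-\tfrac{\log(2\pi)}{\log T}-\tfrac{2\pi c_0}{T\log T\log\log T}\bigr)$; this transforms the displayed inequality into precisely
\[
\widehat{c}>4\pi c_0\left(1+\frac{\log\log T}{150\,c_0\,T\log T}\right)\left(1-\frac{\log(2\pi)}{\log T}-\frac{2\pi c_0}{T\log T\log\log T}\right)^{-1},
\]
the analogue of \eqref{eq:C1} with $T$ in place of $T_0$.

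To finish, I would check that the right-hand side of the last display is a decreasing function of $T$ on $[T_0,\infty)$: the factor $1+\frac{\log\log T}{150c_0T\log T}$ is decreasing because $\frac{\log\log T}{T\log T}$ is decreasing for $T\geq2\pi$ (immediate from its logarithmic derivative $\tfrac{1}{T}\bigl(\tfrac{1}{\log T\log\log T}-1-\tfrac{1}{\log T}\bigr)$, which is negative there), while the bracketed factor is increasing in $T$ and equals $\tfrac{1}{T_0\log T_0\log\log T_0}\bigl(T_0\log(T_0/(2\pi))\log\log T_0-2\pi c_0\bigr)$ at $T=T_0$, which is positive exactly by the hypothesis, hence positive for all $T\geq T_0$. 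Consequently, any $\widehat{c}$ satisfying \eqref{eq:C1} satisfies the displayed inequality for every $T\geq T_0$, so $N(T+h)-N(T)>0$ and the required zero exists. The only real work here is the constant-chasing in the last two displays and confirming the sign of that bracketed factor --- which is exactly what the hypothesis on $T_0$ secures --- together with invoking the standard explicit bound for $R(T)$; I do not expect any deeper obstacle.
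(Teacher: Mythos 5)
Your argument is correct and follows essentially the same route as the paper: apply \eqref{eq:RvM} to $N(T+h)-N(T)$ with $h=\widehat{c}/\log\log T$, lower-bound the main term by $\frac{h}{2\pi}\log\frac{T}{2\pi}$, bound the $S$- and $R$-differences exactly as the paper does, and solve for $\widehat{c}$; your explicit check that the resulting bound is decreasing in $T$ (and that the bracketed factor is positive at $T_0$ by hypothesis) is a detail the paper compresses into ``we can easily see''. The only adjustment needed is the bound $|R(u)|\leq\frac{1}{150u}$: this should be quoted from the explicit result the paper cites (Brent--Platt--Trudgian, Lemma~2) rather than inferred from the asymptotic $R(u)=\frac{1}{48\pi u}+O\left(u^{-3}\right)$, which by itself does not yield an inequality valid for all $u\geq 2\pi$.
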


\begin{proof}
By the Riemann--von Mangoldt formula~\eqref{eq:RvM} with $|R(T)|\leq \frac{1}{150T}$, see \cite[Lemma 2]{BrentPlattTrudgian}, we obtain
\begin{flalign*}
N(T+H)-N(T) &= \frac{H}{2\pi}\log{\frac{T}{2\pi e}} + S(T+H) - S(T) \\
&+ \frac{T+H}{2\pi}\log{\left(1+\frac{H}{T}\right)} + R(T+H) - R(T) \\
&\geq \frac{H}{2\pi}\log{\frac{T}{2\pi}} - 2c_0\frac{\log{T}}{\log{\log{T}}} - c_0\frac{H}{T\log{\log{T}}} - \frac{2}{150T}
\end{flalign*}
for $H\geq0$, where we also applied inequality \eqref{eq:ineq} to the fourth term on the right-hand side of the above equality. Observe that $\widehat{c}>0$. Taking $H=\widehat{c}/\log{\log{T}}$, we can easily see that $N(T+H)-N(T)>0$ if $\widehat{c}$ satisfies inequality~\eqref{eq:C1}. This proves the lemma.
\end{proof}

\begin{proof}[Proof of Corollary~\ref{cor:gaps}]
Let $T\geq T_0=10^{2465}$, function $\mathcal{M}$ defined by \eqref{eq:M}, and $c_0(T)=\mathcal{M}\left(0.759282,20.1911,0.285;T\right)$. Then
\[
0.759282 < c_0(T) \leq \mathcal{M}\left(0.759282,20.1911,0.285;T_0\right).
\]
Let $\gamma\geq T_0$ be the ordinate of $\rho$. Using inequality~\eqref{eq:C1} from Lemma~\ref{lem:gaps}, where the first $c_0$ is taken as $c_0(\gamma)$ and the other two are estimated by the above inequality, we obtain the estimate from Corollary~\ref{cor:gaps}.
\end{proof}

\subsection*{Acknowledgements} The author thanks Richard Brent and David Platt for useful remarks concerning inequality~\eqref{eq:boundS1}, as well as to Micah Milinovich and Ghaith Hiary. Finally, the author is grateful to the anonymous referee for providing many valuable suggestions and corrections, and to his supervisor Tim Trudgian for continual guidance and support while writing this manuscript.


\providecommand{\bysame}{\leavevmode\hbox to3em{\hrulefill}\thinspace}
\providecommand{\MR}{\relax\ifhmode\unskip\space\fi MR }
\providecommand{\MRhref}[2]{%
  \href{http://www.ams.org/mathscinet-getitem?mr=#1}{#2}
}
\providecommand{\href}[2]{#2}

\end{document}